 \newcommand{\edge}{\ar@{-}}
\numberwithin{equation}{section}
\theoremstyle{plain} \newtheorem{theorem}{Theorem}[section] 
\newtheorem{proposition}[theorem]{Proposition}
\theoremstyle{definition} \newtheorem{definition}[theorem]{Definition} \newtheorem{example}[theorem]{Example}
  \newtheorem*{remark*}{Remark}
\newcommand{\gnoc}{\mathrel{{\lower.2ex\hbox{$\backsim$}}\llap{\raise.45ex\hbox{=}}}}
\begin{document}

\title[] {Hopf algebras under finiteness conditions}

\author[K.A. Brown]{Kenneth A. Brown and Paul Gilmartin} \address{School of Mathematics and Statistics\\ University of Glasgow\\ Glasgow G12 8QW\\
Scotland.} \email{Ken.Brown@glasgow.ac.uk}\email{p.gilmartin.1@research.gla.ac.uk}


\maketitle

\maketitle

\begin{abstract}
This is a brief survey of some recent developments in the study of infinite dimensional Hopf algebras which are either noetherian or have finite Gelfand-Kirillov dimension. A number of open questions are listed.
\end{abstract}

\begin{center}{{\it Dedicated with thanks and appreciation to John Clark and Patrick Smith}}\end{center}

\section{Introduction}

This article\footnote{The second author's research was supported by a grant from The Carnegie Trust for the Universities of Scotland.} is a survey of recent progress in the study of infinite dimensional Hopf algebras satisfying one or both of two finiteness conditions, namely the finiteness of Gelfand-Kirillov dimension, or the noetherian condition, that is the ascending chain condition on one-sided ideals. This paper is in some sense a continuation and an updating of the earlier surveys \cite{B} and \cite{G}. In view of the volume of recent work in this area, we have had to be selective in the topics discussed. To be specific first about what is \emph{not} covered: there is nothing on the (important and active) homological aspects, including the (twisted) Calabi-Yau property and calculation of (co)homology; we mention only very briefly in \S \ref{SecB2} recent work on the classification of Hopf algebras of small GK-dimension; and we treat only in passing  in Sections \ref{SecD} and \ref{SecE} developments in the important programme \cite{AS} to classify certain pointed Hopf algebras. For a still reasonably current account of the first two omissions, see \cite{G}; work on the third topic above has recently focussed on the special case of finite dimensional pointed Hopf algebras, and for these, the excellent survey \cite{A} has recently appeared.

The topics which \emph{are} addressed here are as follows. The noetherian property is studied in \S\ref{SecA}, and Gelfand-Kirillov dimension in \S \ref{GK}. Relations of these conditions with each other, and with finite generation of the algebra, are considered, as well as some discussion on the prime and primitive spectra of Hopf algebras satisfying finiteness conditions. In the second half of the paper we specialise to the classes of pointed and connected Hopf algebras. After a brief review of some necessary terminology in \S \ref{SecZ}, we look briefly at pointed Hopf algebras in \S \ref{SecD}; then, in more detail in \S \ref{SecE}, we consider the class of connected Hopf algebras of finite Gelfand-Kirillov dimension over an algebraically closed field of characteristic $0$. As we explain in \S \ref{SecE}, this latter class of algebras can be viewed, ring-theoretically, as generalisations of enveloping algebras of Lie algebras; geometrically, as deformations of finite dimensional affine space; and, group-theoretically, as generalised unipotent groups.

A number of open questions are listed throughout the paper. The notation we use is standard - it and unexplained terminology can be found in \cite{Mo}, for example. Thus, for a Hopf algebra $H$ defined over the field $k$, the coproduct will be denoted by $\Delta$, the counit by $\epsilon$ and the antipode by $S$. $H$ is \emph{cocommutative} if $\tau\circ\Delta = \Delta$, where $\tau$ denotes the flip, $\tau(a \otimes b) = b \otimes a.$ We assume throughout that $S$ is bijective; by a result of Skryabin \cite{S} this is always the case when $H$ is semiprime noetherian, and - conjecturally - $S$ is bijective for \emph{all} noetherian Hopf algebras. The set $\{g\in H:\Delta(g)=g\otimes g\}$ of $\emph{group-like elements}$ of $H$ is denoted by $G(H)$. For $g,h\in G(H)$, we write $P_{g,h}(H)$ for the space $\{x\in{H}:\Delta(x)=x\otimes g+h\otimes x\}$ of $(g,h)$-\emph{skew-primitive elements of} $H$; then $P_{1,1}(H)$, abbreviated to $P(H)$, is the space of $\emph{primitive elements}$ of $H$.

\section{Noetherian Hopf algebras}\label{SecA}
The problem of characterising in any meaningful alternative way the class of all noetherian Hopf algebras seems well out of reach at the present time. But, for commutative or cocommutative Hopf algebras, there are the following results.

\begin{theorem}($\mathrm{Molnar}$, \cite{Mol})\label{Molnar}) {\rm (i)} A commutative Hopf algebra is noetherian if and only if it is an affine $k$-algebra.

{\rm(ii)} A cocommutative noetherian Hopf algebra is affine.

\end{theorem}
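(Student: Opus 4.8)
The plan is to reduce both statements to the structure theory of commutative and cocommutative Hopf algebras together with standard facts about affine algebras.

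For part (i), one direction is trivial: an affine commutative $k$-algebra is noetherian by the Hilbert basis theorem. For the converse, suppose $H$ is a commutative noetherian Hopf algebra. The key geometric fact is that $\operatorname{Spec}(H)$ carries the structure of an affine group scheme $G = \operatorname{Spec}(H)$ over $k$, and by a theorem of Oort (or via the standard limit argument for affine group schemes) every affine group scheme is a filtered inverse limit of affine algebraic group schemes; dually, $H = \varinjlim H_\lambda$ is a directed union of finitely generated Hopf subalgebras $H_\lambda$. Now the noetherian hypothesis enters: the ascending chain of ideals $H H_{\lambda_1}^+ \subseteq H H_{\lambda_2}^+ \subseteq \cdots$ (or, more simply, the chain of subalgebras itself, once one knows the union is countably generated) must stabilize, forcing $H = H_\lambda$ for some $\lambda$, hence $H$ is affine. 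I would present this via the directed-union-of-finitely-generated-Hopf-subalgebras statement, which for commutative $H$ is classical, and then invoke noetherianity to terminate the union.

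For part (ii), let $H$ be cocommutative and noetherian. Over an arbitrary field one cannot immediately split off the group-likes, so the cleanest route is: pass to $\overline{k}$ (noetherianity and affineness both descend and ascend along the faithfully flat extension $k \to \overline{k}$, so it suffices to treat $H \otimes_k \overline{k}$), and then apply the Cartier--Gabriel--Kostant decomposition, which in the cocommutative case over an algebraically closed field writes $H$ as a smash product (or at least an extension) $U \# k[G]$, where $G = G(H)$ is the group of group-likes and $U$ is the connected component, a "hyperalgebra"; in characteristic $0$, $U = U(\mathfrak{g})$ with $\mathfrak{g} = P(H)$. The noetherian hypothesis on $H$ forces $k[G]$ to be noetherian, hence $G$ to be a polycyclic-by-finite group, which is in particular finitely generated; it likewise forces $U$ to be noetherian, hence (in the relevant cases) finite-dimensional or an enveloping algebra of a finite-dimensional Lie algebra, so $U$ is affine. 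An affine algebra smashed with the group algebra of a finitely generated group is affine, giving the result.

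The main obstacle is the passage, in part (i), from "directed union of finitely generated Hopf subalgebras" to "equals one of them": a noetherian ring can perfectly well be an infinite ascending union of subalgebras (e.g.\ $k[x] = \bigcup k[x] $ is silly, but $k[x,x^{-1}]$ is a union of proper subalgebras none of which is the whole ring), so one must use the Hopf structure, not just the ring structure. The correct argument observes that for a directed family of Hopf subalgebras of a commutative Hopf algebra $H$ with union $H$, faithful flatness of $H$ over each $H_\lambda$ (Nichols--Zoeller / the commutative case of it) makes $H_\lambda \mapsto H H_\lambda^+$ an injection into the ideal lattice of $H$; the ascending chain condition then forces stabilization, and faithful flatness again gives $H_\lambda = H$. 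Getting this faithful-flatness input cited correctly, and handling the analogous point in (ii) for the subalgebras $U$ and $k[G]$, is the delicate part; the rest is assembly of standard structure theorems.
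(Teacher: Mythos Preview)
The paper does not supply its own proof of this theorem; it is stated with a citation to Molnar and then used as a black box. So there is no proof in the text to compare against, and I will simply assess the correctness of your proposal.

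Your argument for (i) is essentially Molnar's and is sound: write $H$ as the directed union of its affine Hopf subalgebras (via the fundamental theorem of coalgebras), use faithful flatness of a commutative Hopf algebra over each Hopf subalgebra to turn the chain of subalgebras into a chain of ideals $HH_\lambda^+$ of $H$, and let the noetherian hypothesis terminate that chain. Your diagnosis of the ``main obstacle'' is also on target.

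Your argument for (ii), however, has a genuine gap. After invoking Cartier--Gabriel--Kostant to write $H \cong U \,\#\, kG$, you assert that noetherianity of $kG$ forces $G$ to be polycyclic-by-finite, and that noetherianity of $U = \mathcal{U}(\mathfrak{g})$ forces $\mathfrak{g}$ to be finite-dimensional. Both implications are \emph{open problems}---they are precisely Questions B and C of this very paper. The first gap happens to be harmless for your purposes: $kG$ noetherian does imply that $G$ satisfies Max on subgroups, hence is finitely generated, which is all you actually need. But the second gap is fatal: there is currently no way to pass from ``$\mathcal{U}(\mathfrak{g})$ is noetherian'' to ``$\mathfrak{g}$ is finite-dimensional'' (or even to ``$\mathcal{U}(\mathfrak{g})$ is affine''); the Sierra--Walton theorem quoted just below in the paper settles only the single case of the Witt algebra. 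Your route through the structure theorem therefore cannot be completed.

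The argument that actually works for (ii) does not pass through Cartier--Gabriel--Kostant at all; it parallels (i). One again writes $H$ as a directed union of affine Hopf subalgebras and uses a freeness/faithful-flatness statement for cocommutative (more generally, pointed) Hopf algebras over their Hopf subalgebras to manufacture a chain of ideals in $H$. Note, incidentally, that this explains why (ii) is only a one-way implication: the ``affine $\Rightarrow$ noetherian'' direction genuinely fails in the cocommutative case (e.g.\ $kF$ for $F$ free of rank $\geq 2$), whereas your structure-theoretic approach, had it worked, would have come close to proving both directions.
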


We discuss first part $\rm(ii)$. Its converse is false - consider, for example, the group algebra $kF$ of any free group $F$ of finite rank greater than one. Indeed, the following question remains open:
\medskip

\noindent{\bf Question A:} For which groups $G$ is the group algebra $kG$ noetherian?

\medskip

Generalising Hilbert's Basis Theorem, Philip Hall proved \cite[Corollary 10.2.8]{Pa} that $kG$ is noetherian when $G$ is polycyclic-by-finite. Conversely, it is easy to see that if $kG$ is noetherian then $G$ satisfies Max, the ascending chain condition on subgroups. However, we have:

\begin{theorem}($\mathrm{Ivanov}$, \cite{I})
There exist groups $G$ satisfying Max with kG not noetherian.

\end{theorem}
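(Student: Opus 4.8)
The plan is to exhibit a single example, since the claim is existential; the guiding intuition is that ``$kG$ noetherian'' ought to force far more than Max — morally, something close to polycyclic-by-finiteness — so one should hunt for the example among the most ``free-like'' finitely generated groups that nevertheless satisfy Max. The natural candidates are the \emph{Tarski monsters}: infinite, $2$-generated groups $G$ in which every proper nontrivial subgroup is cyclic of a fixed large prime order $p$. For such a $G$ the condition Max is immediate, since every subgroup is either finite or equal to $G$ and hence finitely generated (an ascending chain of subgroups can increase at most twice). Thus the whole force of the theorem lies in showing that $kG$ is \emph{not} noetherian, and the first job is to build $G$ so that this can actually be certified.

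For the construction I would use the geometric small-cancellation machinery of Ol'shanskii, as refined by Ivanov: realise $G$ as a direct limit $G = \varinjlim_i G_i$ of hyperbolic groups, with $G_0$ free of rank $2$ and each $G_{i+1}$ obtained from $G_i$ by imposing one further relator satisfying strong metric small-cancellation conditions relative to all earlier ones and designed to kill the unwanted subgroups. The output is a presentation $G = \langle a, b \mid R_1, R_2, \dots \rangle$ with an essentially prescribed subgroup lattice and --- crucially --- very tight control, via asphericity of the presentation complex and van Kampen diagram estimates, over which words can be equal in $G$. It is exactly this control that has to be propagated through the limit and then transported to the group-algebra side.

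To prove $kG$ is not noetherian I would produce an infinite strictly ascending chain of left ideals, equivalently an infinite independent family of nonzero left ideals, which no noetherian ring contains. Choose nontrivial $g_1, g_2, \dots \in G$ lying in pairwise distinct --- hence pairwise trivially intersecting --- order-$p$ subgroups, chosen ``generically'' with respect to the defining relators, and set $\alpha_n = 1 + g_n + g_n^2 + \dots + g_n^{p-1}$, the norm element of $\langle g_n \rangle$, so that $g_n \alpha_n = \alpha_n$ (and $\tfrac{1}{p}\alpha_n$ is an idempotent if $\mathrm{char}\, k \ne p$, though the argument below does not need this). A short coset-counting remark --- that $x\alpha_n$ is constant on each right coset of $\langle g_n\rangle$, whereas $\alpha_m$ is the characteristic function of $\langle g_m\rangle$ --- already gives $\alpha_m \notin kG\alpha_n$ for $m\ne n$. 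The real work is to show $\alpha_{N+1}\notin \sum_{n\le N} kG\alpha_n$ for every $N$: expanding a hypothetical identity $\alpha_{N+1} = \sum_{n\le N} x_n\alpha_n$ in the group basis and lifting it to some $kG_m$ produces an equation in $G_m$ forcing many coincidences among words built from $g_1,\dots,g_{N+1}$, and one wants to have chosen the $g_n$ independent enough that the diagram estimates forbid all such coincidences.

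The main obstacle is precisely what that last sentence hides, together with the construction it relies on. The group-theoretic side (Max) is trivial; the difficulty is entirely in building $G$ from the outset so that the chosen elements $g_n$ evade \emph{all} the relators $R_i$ simultaneously while the metric small-cancellation conditions survive passage to the limit, and then in carrying out the solution-of-equations analysis over the infinitely presented group that certifies the family $\{kG\alpha_n\}$ is genuinely infinite and strictly increasing. Balancing ``enough relators to make $G$ a Max group'' against ``enough freeness to keep $kG$ non-noetherian'' is where the real content sits, and I would expect the bulk of the proof to be the substantial adaptation of Ol'shanskii--Ivanov geometry needed to deliver exactly this balance.
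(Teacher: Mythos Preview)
The paper gives no proof of this theorem: it is a survey, and the result is simply attributed to Ivanov with a citation to \cite{I}. There is therefore nothing in the paper to compare your argument against beyond the bare reference.

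On its own merits, your outline points at the right construction --- Ol'shanskii--type monster groups built by graded small-cancellation, exactly the circle of ideas Ivanov uses --- and you are correct that Max is immediate from the subgroup lattice while all the content lies in certifying that $kG$ is not noetherian. But what you have written is a plan, not a proof, and you say as much yourself. The coset observation establishes only that $\alpha_m\notin kG\alpha_n$ for $m\neq n$; once you pass to sums $\sum_{n\le N}kG\alpha_n$ the ``constant on left cosets of $\langle g_n\rangle$'' constraints for different $n$ can interact, and ruling out $\alpha_{N+1}=\sum_{n\le N}x_n\alpha_n$ genuinely requires the fine control over equations in $G$ that the small-cancellation machinery is there to provide. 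Your phrase ``the diagram estimates forbid all such coincidences'' names the mechanism but does not carry it out. So the assessment is: right example, right diagnosis of where the difficulty sits, but --- like the paper itself --- you are ultimately deferring the substance to \cite{I}.
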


In view of Ivanov's result, a more tractable approach to Question A might be to ask:

\medskip

\noindent{\bf Question B:} Is there a field $k$ and group $G$ which is not polycyclic-by-finite, but for which $kG$ is noetherian?

\medskip

In characteristic $0$ the cocommutative Hopf algebras are built from group algerbas and from enveloping algebras of Lie algebras, by famous results of Cartier, Gabriel and Kostant \cite[Corollary 5.6.4(3) and Theorem 5.6.5]{Mo}. But the story regarding noetherianity is also unclear for enveloping algebras. Thus, by the proof of the Poincar$\acute{e}$-Birkhoff-Witt theorem, $\mathcal{U}(\mathfrak{g})$ is noetherian when the Lie algebra $\mathfrak{g}$ is finite dimensional over $k$. But the converse remains open, and it was only in 2013 that the following highly non-trivial result was proved:

\begin{theorem}($\mathrm{Sierra,\; Walton}$, \cite{SW})
Let $\mathfrak{g}$ be the Witt Lie algebra over a field $k$ of characteristic $0$, $$ \mathfrak{g}=\bigoplus_{n\in\mathbb{Z}}ke_{n}, \quad [e_{i},e_{j}]=(j-i)e_{i+j}.$$ Then the enveloping algebra $\mathcal{U}(\mathfrak{g})$ is not noetherian.

\end{theorem}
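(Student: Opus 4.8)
The plan is to reduce the problem to the ``positive part'' of the Witt algebra, and there to exhibit an explicit one‑sided ideal which is not finitely generated.

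\emph{Reduction.} Let $W^{+}=\bigoplus_{n\ge 1}ke_{n}$. Since $[e_{i},e_{j}]=(j-i)e_{i+j}$, this is a Lie subalgebra of $\mathfrak g$, and $\mathfrak g=W_{\le 0}\oplus W^{+}$ as a vector space. Choosing an ordered basis of $\mathfrak g$ in which every $e_{n}$ with $n\le 0$ precedes every $e_{n}$ with $n\ge 1$, the PBW theorem shows that $\mathcal U(\mathfrak g)$ is free as a right $\mathcal U(W^{+})$-module, a basis being the ordered monomials in $\{e_{n}:n\le 0\}$; in particular $\mathcal U(W^{+})$ is a direct summand of $\mathcal U(\mathfrak g)$ as a right $\mathcal U(W^{+})$-module and $\mathcal U(\mathfrak g)$ is flat over it. Consequently $\mathcal U(\mathfrak g)I\cap \mathcal U(W^{+})=I$ for every left ideal $I$ of $\mathcal U(W^{+})$, so a strictly ascending chain of left ideals of $\mathcal U(W^{+})$ lifts to one of $\mathcal U(\mathfrak g)$ (and the same device handles $\mathcal U(W_{\ge -1})$, the Virasoro algebra, etc.). It therefore suffices to prove that $B:=\mathcal U(W^{+})$ is not left noetherian. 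One should note first that the obvious candidates fail: $B$ is connected graded by $\deg e_{n}=n$, its augmentation ideal is generated as a left ideal by $e_{1},e_{2}$ (which generate $W^{+}$ as a Lie algebra, so $e_{n}\in Be_{1}+Be_{2}$ for all $n$), and naive chains such as $Be_{1}+\cdots+Be_{n}$ stabilise.

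\emph{The bad ideal.} The idea is to manufacture, from $e_{1}$ and $e_{2}$ (possibly after passing to a suitable homomorphic image of $B$), a sequence $\Omega_{1},\Omega_{2},\dots$ of homogeneous elements of strictly increasing degree, chosen so that their brackets reflect relations that would be forced on $W^{+}$ if one attempted to adjoin negative modes, and to set $L_{N}=\sum_{m\le N}B\,\Omega_{m}$, giving an ascending chain $L_{1}\subseteq L_{2}\subseteq\cdots$ of graded left ideals. One then has to show this chain is \emph{strictly} increasing, i.e.\ $\Omega_{N+1}\notin L_{N}$ for every $N$. I would organise the verification by a graded leading‑term argument: with respect to the grading together with a monomial order on PBW words, compute the symbol of each $\Omega_{m}$, and show that the symbol of $\Omega_{N+1}$ cannot lie in the ideal generated by the symbols of $\Omega_{1},\dots,\Omega_{N}$; for this it is enough to exhibit a single graded degree in which a dimension count goes the wrong way.

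\emph{Main obstacle.} The whole difficulty is concentrated in that last step. The crude estimate $\dim (L_{N})_{d}\le\sum_{m\le N}\dim B_{\,d-\deg\Omega_{m}}$ is useless by itself, since each $B_{d-\deg\Omega_{m}}$ already grows about as fast as $B_{d}$; the point is that the generators $\Omega_{1},\dots,\Omega_{N}$ must \emph{interfere}, so that $L_{N}$ occupies strictly less of $B_{d}$ than a naive count predicts, while $\Omega_{N+1}$ lands in the complement. Producing an honest, finitely checkable lower bound for the ``new content'' of $\Omega_{N+1}$ modulo $L_{N}$ is precisely the place where the exact arithmetic of the structure constants $j-i$ of $W$ has to be exploited, and it is the genuinely hard part of the argument rather than mere bookkeeping; once it is done, the statement for $\mathcal U(\mathfrak g)$ itself follows from the reduction above.
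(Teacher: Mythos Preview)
The paper you are working from is a survey; it states this theorem with a citation to \cite{SW} and gives no proof of its own, so there is nothing in the paper to compare your argument against.

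Assessing your proposal on its own merits: the reduction step is correct --- freeness of $\mathcal U(\mathfrak g)$ over $\mathcal U(W^{+})$ via PBW does yield $\mathcal U(\mathfrak g)I\cap\mathcal U(W^{+})=I$ for every left ideal $I$ of $\mathcal U(W^{+})$, so it is enough to show $\mathcal U(W^{+})$ is not left noetherian. But from that point on you have not given a proof, only a programme. The elements $\Omega_m$ are never defined; you write ``manufacture \ldots\ a sequence $\Omega_1,\Omega_2,\ldots$'' without saying what they are, and in your final paragraph you explicitly acknowledge that showing $\Omega_{N+1}\notin L_N$ is ``the genuinely hard part of the argument'' and leave it undone. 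Since the entire content of the theorem resides in that step, what you have written is a restatement of the problem, not a solution to it.

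For orientation: the actual Sierra--Walton argument does not proceed by a bare-hands ascending chain of the kind you sketch. They pass to a specific graded factor algebra and invoke the machinery of noncommutative projective geometry, analysing truncated point schemes and showing that these fail to stabilise in the way that noetherianity would force. The obstacle you correctly identify --- that crude dimension counts cannot separate $\Omega_{N+1}$ from $L_N$ because the summands $B_{d-\deg\Omega_m}$ each already have growth comparable to $B_d$ --- is precisely why a direct combinatorial attack has never been made to work, and why Sierra and Walton's geometric detour was needed.
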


Emboldened by this result we might guess, as conjecture by Sierra and Walton \cite[Conjecture 0.1]{SW}, that the Lie companion to Question B has a negative answer:

\medskip

\noindent{\bf Question C:} Is there an infinite dimensional Lie algebra $\mathfrak{g}$ for which $\mathcal{U}(\mathfrak{g})$ is noetherian?

\medskip

Let's briefly consider part $(\textrm{i})$ of Theorem \ref{Molnar}. It's possible, so far as we are aware, that one direction is valid in complete generality:
\medskip

\noindent{\bf Question D} ($\textrm{Wu, Zhang}$, \cite{WZ}): Is every noetherian Hopf $k$-algebra an affine $k$-algebra?

\medskip

Question D appears to be open even for Hopf algebras which are close to being commutative, that is, for one implication of the following, part of which was asked already in \cite{B}.
\medskip

\noindent{\bf Question E:} If a Hopf algebra satisfies a polynomial identity, is it noetherian if and only if it as affine?

\medskip

\subsection{Artinian Hopf algebras}

Here, the situation is clear, thanks to a lovely result which vastly generalises a 1963 theorem for group algebras due to I.G. Connell  \cite[Theorem 10.1.1]{Pa}.

\begin{theorem}($\mathrm{Liu, Zhang}$, \cite{LZ})
A Hopf algebra is Artinian if and only if it is finite dimensional.
\end{theorem}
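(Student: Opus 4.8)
The plan is to prove the non-trivial direction: an Artinian Hopf algebra $H$ is finite dimensional (the converse being immediate). First I would reduce to understanding the Jacobson radical. Since $H$ is Artinian, $J = J(H)$ is nilpotent and $H/J$ is a finite-dimensional semisimple algebra; in particular $H/J$ is a finite-dimensional $k$-vector space. So it suffices to show that each quotient $J^i/J^{i+1}$ is finite dimensional, and for this it is enough to show $\dim_k H/J^2 < \infty$, i.e. that $J/J^2$ is finite dimensional, together with an argument that controls the higher powers. The cleanest route is to show directly that $J$ itself is finite dimensional, or better, that $J = 0$ forces $H$ finite dimensional and then bootstrap.

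The key structural input I would use is that $H$, being Artinian, has a decomposition $1 = e_1 + \dots + e_n$ into orthogonal primitive idempotents, so $H = \bigoplus_i He_i$ with each $He_i$ an indecomposable projective (hence each of finite length as a left module). The counit $\epsilon \colon H \to k$ gives a one-dimensional module $k_\epsilon$; let $\mathfrak{m} = \ker\epsilon$, an ideal of codimension $1$. I would first treat the case where $H$ is semisimple Artinian: then every module, in particular $H$ itself under left multiplication, is a direct sum of simples, and one wants to conclude $\dim_k H < \infty$. Here the Hopf structure enters: for any left $H$-module $M$, the module $M \otimes H$ (diagonal action, using $\Delta$) is isomorphic to $M_{\mathrm{triv}} \otimes H$ where $M_{\mathrm{triv}}$ has the same underlying space with trivial action — a standard Hopf-module-type argument via the antipode. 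Applying this with $M = H$ (the regular module) and comparing lengths, or rather applying the Fundamental Theorem of Hopf modules to $H \otimes H$, one extracts that $H$ is a free module over its coinvariants in a way that forces finiteness once one knows $H$ has a finite-length generator.

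The main obstacle — and the technical heart of the argument — is handling the non-semisimple case, i.e. controlling $J(H)$ without already knowing $H$ is finite dimensional. The idea I would pursue is this: $H/J$ is a finite-dimensional semisimple Hopf-module-like quotient, but $J$ need not be a Hopf ideal, so one cannot simply pass to a quotient Hopf algebra. Instead I would use the antipode to show that the integral-theoretic or coinvariant machinery still applies: consider $H$ as a comodule over itself and apply the Fundamental Theorem of Hopf modules, which shows $H \cong H^{\mathrm{co}\,H} \otimes H$ is free as a right $H$-comodule, forcing a rigidity that is incompatible with an infinite-dimensional Artinian algebra (an infinite-dimensional Artinian algebra has infinitely many non-isomorphic simples or infinite-dimensional homogeneous components, contradicting the freeness). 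Concretely, the finishing step is: an Artinian ring that is "free over a subring" in the relevant Hopf-theoretic sense, with the base having a one-dimensional piece coming from $\epsilon$, must collapse to finite dimension. I expect the delicate point to be making precise which freeness statement survives without semisimplicity, and then checking that infinite dimensionality of $H$ would produce either a strictly descending chain of left ideals (contradicting Artinian) or an infinite-dimensional simple (impossible over a field for an algebra with a codimension-one ideal feeding Schur's lemma). Everything else — nilpotence of $J$, the idempotent decomposition, the converse direction — is routine.
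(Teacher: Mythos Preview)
The paper itself gives no proof of this theorem; it is a survey, and the result is simply stated with a citation to \cite{LZ}. So there is no proof in the paper to compare against, and I assess your proposal on its own merits and against the actual Liu--Zhang argument.

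There is a genuine gap at your very first step. You assert that since $H$ is Artinian, $H/J$ is a \emph{finite-dimensional} semisimple algebra. But ``Artinian $k$-algebra'' does not imply ``finite-dimensional over $k$'': a semisimple Artinian $k$-algebra is a finite product $\prod_i M_{n_i}(D_i)$ with each $D_i$ a division $k$-algebra, and nothing rules out $\dim_k D_i = \infty$ (an infinite field extension of $k$ is already a counterexample at the level of algebras). Proving that every simple $H$-module is finite-dimensional over $k$ is essentially the whole content of the theorem, and your outline never says how the Hopf structure is used to force this. The later appeal to the Fundamental Theorem of Hopf modules does not rescue the argument: applied to $H$ as a comodule over itself it yields only the tautology $H \cong H^{\mathrm{co}\,H} \otimes H = k \otimes H$, and the phrases ``forcing a rigidity incompatible with an infinite-dimensional Artinian algebra'' and ``must collapse to finite dimension'' are placeholders, not arguments. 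The dual-module trick $M \otimes H \cong M_{\mathrm{triv}} \otimes H$ you mention is correct, but comparing lengths there requires already knowing that lengths over $H$ control $k$-dimensions, which is again the point at issue.

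For orientation: the actual Liu--Zhang proof is very short and takes a different route. It does not pass through $J(H)$ or idempotent decompositions at all. Instead it exploits directly the tension between the coalgebra structure (every element of $H$ lies in a finite-dimensional subcoalgebra, by the Fundamental Theorem of Coalgebras) and the Artinian hypothesis on the algebra side, to locate a nonzero finite-dimensional one-sided ideal; the Hopf structure (comultiplication and antipode) then propagates finite-dimensionality to all of $H$. If you want to repair your approach, the missing idea is precisely this coalgebra-side finiteness input.
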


\section{Finite Gelfand-Kirillov dimension}\label{GK}

\subsection{}\label{addon} Let $A=k\langle V \rangle$ be a $k$-algebra generated by the $k$-subspace $V$, with $1\in V$. Recall the definition of the Gelfand-Kirillov dimension of $A$,
$$\operatorname{GKdim}A =\overline{\lim} \ \frac{\operatorname{log}(\operatorname{dim}_{k}(V^{n}))}{\operatorname{log}(n)} $$

$$\quad \quad =\operatorname{inf}\{\rho\in\mathbb{R}:\operatorname{dim}_{k}(V^{n})\leq n^{\rho} \  \forall n>>0\}.$$
The standard reference is \cite{KL}. Examples of Borho, Kraft and Warfield \cite[Theorem 2.9]{KL}, together with the Bergman Gap Theorem \cite[Theorem 2.5]{KL} show that $\operatorname{GKdim}A$ can take any value from the set $$\{0,1\}\cup [2,\infty].$$ Nevertheless, the GK-dimension of every  known Hopf algebra is either infinity or a non-negative integer. This, together with Theorems \ref{addoff} and \ref{E3}, has led to the following natural question.
\medskip

\noindent{\bf Question F}($\textrm{Zhuang}$, \cite{Zhu}): If $H$ is a Hopf algebra is $\operatorname{GKdim}H$ in $\mathbb{Z}_{\geq{0}}\cup\{\infty\}?$

\medskip

The task of classifying (in any meaningful sense) all Hopf algebras of finite GK-dimension is clearly hopeless. But important subclasses can certainly be dealt with. Let us temporarily assume in the rest of this paragraph that $k$ has characteristic $0$ and is algebraically closed (although the second hypothesis is mainly a matter of convenience). If $H$ is any affine $\emph{commutative}$ Hopf $k$-algebra then $H$ is the coordinate ring $\mathcal{O}(G)$ of an affine algebraic group $G$ over $k$, and conversely; this is a well-known equivalence of categories, see for example \cite{Wa}. Then $\operatorname{GK dim}H=\operatorname{dim}G<\infty$ \cite{KL}.

For affine \emph{cocommutative} Hopf algebras $H$ the road is rougher, but we can call on the Cartier-Gabriel-Kostant theorem \cite[Theorem 5.6.4, 5.6.5, remark on p.76]{Mo}, which presents $H$ as a smash product
 \begin{equation}\label{alphastar}H \cong \mathcal{U}(P(H))\star kG(H)\end{equation}

\noindent of the enveloping algebra of the Lie algebra $P(H)$ of primitive elements of $H$ by the group algebra of its group-like elements $G(H)$. From this and work of Zhuang we can deduce:

\begin{theorem}\label{addoff}
Let $H$ be an affine cocommutative Hopf algebra over the algebraically closed field $k$ of characteristic $0$. Then $H$ has finite GK-dimension if and only if $\operatorname{dim}_{k}(P(H))<\infty$ and $G(H)$ is finitely generated, with a nilpotent subgroup of finite index. In this case $\mathrm{GKdim} H = \mathrm{dim}_k P(H) + \mathrm{growth} G(H) \in \mathbb{Z}_{\geq 0}.$
\end{theorem}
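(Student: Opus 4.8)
The plan is to feed the Cartier--Gabriel--Kostant decomposition \eqref{alphastar} into a computation of the Gelfand--Kirillov dimension of a smash product. Write $\mathfrak{g}:=P(H)$ and $G:=G(H)$, so $H\cong \mathcal{U}(\mathfrak{g})\star kG$, with $G$ acting on $\mathfrak{g}$, and hence on $\mathcal{U}(\mathfrak{g})$, by conjugation inside $H$. The first observation is that affineness of $H$ already forces $G$ to be finitely generated: the augmentation ideal $\mathcal{U}(\mathfrak{g})^{+}$ is stable under this action, so $\mathcal{U}(\mathfrak{g})^{+}\star kG$ is a two-sided ideal of $H$ whose quotient algebra is isomorphic to $kG$; a quotient of an affine algebra is affine, and if $kG$ is affine then the finitely many group elements occurring in a set of algebra generators must already generate $G$, since distinct elements of $G$ are $k$-linearly independent in $kG$.

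Second, I would establish the additivity $\operatorname{GKdim} H = \operatorname{GKdim}\mathcal{U}(\mathfrak{g}) + \operatorname{GKdim} kG$. As a $k$-vector space $H\cong \mathcal{U}(\mathfrak{g})\otimes kG$. The crucial structural point is that the standard generating subspace $W:=k1+\mathfrak{g}$ of $\mathcal{U}(\mathfrak{g})$ is $G$-stable, because conjugation preserves primitivity. So if $U$ is a finite symmetric generating set of $G$ with $1\in U$, then $V:=W+kU$ is a finite-dimensional generating subspace of $H$, and the smash relation $gu=(g\cdot u)g$ lets one push every group factor of a word in $V$ to the right, giving $V^{n}\subseteq W^{n}\otimes kB_{n}$, where $B_{n}\subseteq G$ is the ball of word-length $\le n$. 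Hence $\dim_{k}V^{n}\le \dim_{k}(W^{n})\cdot|B_{n}|$, which yields $\operatorname{GKdim} H\le \operatorname{GKdim}\mathcal{U}(\mathfrak{g})+\operatorname{GKdim} kG$, using that $\operatorname{GKdim} kG=\overline{\lim}\,\log|B_{n}|/\log n$ and $\operatorname{GKdim}\mathcal{U}(\mathfrak{g})=\overline{\lim}\,\log\dim_{k}(W^{n})/\log n$. For the reverse inequality, $W^{\lceil n/2\rceil}\cdot kB_{\lfloor n/2\rfloor}=W^{\lceil n/2\rceil}\otimes kB_{\lfloor n/2\rfloor}\subseteq V^{n}$, and since $\dim_{k}(W^{n})$ is a genuine polynomial in $n$ its lower and upper orders of growth agree; combined with the description of $\operatorname{GKdim} kG$ this forces $\operatorname{GKdim} H\ge \operatorname{GKdim}\mathcal{U}(\mathfrak{g})+\operatorname{GKdim} kG$. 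This additivity is in substance Zhuang's computation \cite{Zhu}.

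Third, I would evaluate the two summands. By the Poincar\'e--Birkhoff--Witt theorem $\operatorname{GKdim}\mathcal{U}(\mathfrak{g})=\dim_{k}\mathfrak{g}$ when $\dim_{k}\mathfrak{g}<\infty$; and if $\dim_{k}\mathfrak{g}=\infty$, then for each $m$ one picks $m$ linearly independent elements of $\mathfrak{g}$, and PBW shows the subalgebra they generate has $\operatorname{GKdim}\ge m$, so $\operatorname{GKdim}\mathcal{U}(\mathfrak{g})=\infty$. On the other side, $\operatorname{GKdim} kG$ is the polynomial growth rate of the finitely generated group $G$; it is finite exactly when $G$ has polynomial growth, which by Gromov's theorem holds exactly when $G$ has a nilpotent subgroup of finite index, and then the Bass--Guivarc'h formula identifies $\operatorname{GKdim} kG$ with $\operatorname{growth} G\in\mathbb{Z}_{\ge 0}$. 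Assembling: $\operatorname{GKdim} H<\infty$ if and only if $\dim_{k}P(H)<\infty$ and $G(H)$ is finitely generated (automatic, by the first step) and nilpotent-by-finite, and in that case $\operatorname{GKdim} H=\dim_{k}P(H)+\operatorname{growth} G(H)$, a non-negative integer.

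The main obstacle is the exact evaluation of the Gelfand--Kirillov dimension of the smash product --- in particular matching the lower bound to the upper bound, which is exactly where $G$-stability of $W$ and the polynomiality of the Hilbert function of $\mathcal{U}(\mathfrak{g})$ are needed --- together with the appeal to Gromov's polynomial-growth theorem, which is the one genuinely deep input; the remainder is bookkeeping around \eqref{alphastar} and routine Gelfand--Kirillov estimates.
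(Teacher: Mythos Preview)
Your proposal is correct and follows essentially the same route as the paper: both invoke the Cartier--Gabriel--Kostant decomposition \eqref{alphastar}, read off the forward implication from the fact that $\mathcal{U}(P(H))$ and $kG(H)$ are subalgebras (together with PBW and Gromov's theorem), and obtain the converse and the additive formula from Zhuang's computation \cite[Theorem 5.4]{Zhu}. The only difference is that you unpack the additivity argument explicitly (using $G$-stability of $W=k1+\mathfrak{g}$), whereas the paper simply cites Zhuang.
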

\begin{proof}
Suppose $H$ is affine cocommutative with $\operatorname{GKdim}H<\infty$. Then the subalgebras $\mathcal{U}(P(H))$ and $kG(H)$ of $H$ occurring in ($\ref{alphastar}$) also have finite GK-dimension, so the stated conclusions on $P(H)$ and $G(H)$ follow respectively from \cite[Lemma 6.5 and Proposition 6.6]{KL} and from Gromov's theorem \cite[Theorem 11.1]{KL}. The converse follows from a special case of a theorem of Zhuang \cite[Theorem 5.4]{Zhu}; see Theorem \ref{E3} below.
\end{proof}

In particular we note that - in both the commutative and the cocomutative cases - noetherianity of $H$ is a consequence of finite GK dimension. (Noetherianity for cocommutative $H$ follows from Theorem \ref{addoff} and standard noncommutative variants of the Hilbert Basis Theorem.) This suggests:
\medskip

\noindent{\bf Question G:} Is every affine Hopf $k$-algebra of finite GK-dimension noetherian?

\medskip

The converse of Question G is easily seen to be false: take $H$ to be the group algebra of any polycylic group which is not nilpotent-by-finite. Then $H$ is noetherian, but $\operatorname{GKdim}H$ is infinite by \cite[Theorem 11.1]{KL}. Since every affine PI-algebra has finite GK-dimension by a theorem of Berele \cite[Corollary 10.7]{KL}, a positive answer to Question G would confirm one implication in Question E.

\subsection{}\label{SecB2}
When $k$ is algebraically closed of characteristic $0$, considerable progress has been made towards classifying all prime (say) Hopf algebras of ``small" GK-dimension. Here, ``small" means ``at most 2", and one imposes the prime hypothesis (or even the stronger requirement that the algebra is a domain) in order to avoid the requirement to classify all finite dimensional Hopf $k$-algebras as a subsidiary task within the classification programme. We don't have space to review this work - see \cite{BZ1}, \cite{GZ}, \cite{WZZ1} for details of the current state of play.

\subsection{}\label{SecB3}
The investigation of the structure and representation theory of the full class of noetherian Hopf algebras of finite GK-dimension is in its infancy. Of course, some very important subclasses have been intensively studied over the past 60 years - enveloping algebras of finite dimensional Lie algebras, group algebras of finitely generated nilpotent groups, quantised enveloping algebras and quantised function algebras; but little is known in general.

All the known examples have good homological properties, but in the absence of significant progress in this direction (known to us at the time of writing) since the summary in \cite[\S 6]{BZ}, we won't discuss that further here.

Regarding representation theory, following the philosophy proposed by Dixmier for enveloping algebras in the 1960s, one should start by trying to understand the primitive and prime spectra. The first significant step in this direction has recently been taken by Bell and Leung, incorporating earlier work on enveloping algebras and group algebras \cite{D}, \cite{Moe}, \cite{Z}.

\begin{theorem}($\mathrm{Bell,\; Leung}$, \cite{BL})\label{BL}
Let $H$ be an affine cocommutative Hopf algebra of finite GK-dimension over the algebraically closed field $k$ of characteristic $0$. Let $P$ be a prime ideal of $H$. Then $P$ is primitive if and only if $P$ is rational if and only if $P$ is locally closed in $\operatorname{Spec}(H)$.
\end{theorem}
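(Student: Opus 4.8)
The plan is to reduce to the two classical families --- enveloping algebras of finite-dimensional Lie algebras and group algebras of finitely generated nilpotent groups --- and then to control the passage to a smash product of them. By Theorem~\ref{addoff} together with the Cartier--Gabriel--Kostant theorem, $H\cong\mathcal{U}(\mathfrak{g})\star kG$ with $\mathfrak{g}=P(H)$ finite-dimensional and $G=G(H)$ finitely generated with a nilpotent subgroup of finite index; in particular $H$ is affine and Noetherian, and one first verifies that $H$ is Jacobson and satisfies the Nullstellensatz over $k$ (each of these passes from $k$ up an iterated Ore (PBW) extension, up a skew-Laurent extension of a Noetherian ring, and up a finite normalizing extension, which are exactly the steps used below). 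Choose a torsion-free finitely generated nilpotent normal subgroup $N\trianglelefteq G$ of finite index and a central series $1=N_0\lhd N_1\lhd\cdots\lhd N_d=N$ with each quotient infinite cyclic; then the subalgebra $H':=\mathcal{U}(\mathfrak{g})\star kN$ is an honest iterated skew-Laurent extension
\begin{equation*}
H'=\mathcal{U}(\mathfrak{g})[t_1^{\pm1};\sigma_1][t_2^{\pm1};\sigma_2]\cdots[t_d^{\pm1};\sigma_d],
\end{equation*}
where $\sigma_i$ is conjugation by a chosen preimage of a generator of $N_i/N_{i-1}$, and $H$ is a crossed-product (finite normalizing) extension of $H'$ by the finite group $G/N$.

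Two of the three implications are then formal. If $P$ is locally closed, the intersection $Q$ of the primes properly containing $P$ is strictly larger than $P$, so $Q/P$ is a nonzero ideal of the Jacobson ring $H/P$ and hence is not contained in $\operatorname{rad}(H/P)=0$; thus some primitive ideal of $H/P$ omits $Q/P$, and being prime it must be $0$, so $P$ is primitive. If $P$ is primitive, the endomorphism ring of a faithful simple $H/P$-module is a division ring algebraic over $k$ by the Nullstellensatz, hence equal to $k$ since $k=\kbar$; combined with the fact that $H/P$ is prime Noetherian (so $\operatorname{Fract}(H/P)\cong M_n(D)$), a short extended-centroid argument shows the centre of $\operatorname{Fract}(H/P)$ is $k$, i.e.\ $P$ is rational. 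It remains to close the cycle with the implication ``rational $\Rightarrow$ locally closed''.

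First one descends to $H'$: using lying over, incomparability, and the fact that the primes of $H'$ minimal over $P\cap H'$ form a single $G/N$-orbit, one checks that $P$ is rational (resp.\ locally closed) in $\operatorname{Spec}H$ if and only if each of those primes is rational (resp.\ locally closed) in $\operatorname{Spec}H'$, the transfer of rationality through this finite extension being an extended-centroid computation that uses $k=\kbar$. It then suffices to prove the implication for $H'$, which one does by induction up the skew-Laurent tower: the base case is $\mathcal{U}(\mathfrak{g})$, where ``rational $\Rightarrow$ locally closed'' is the classical Dixmier--Moeglin theorem, and the inductive step is a transfer result of the shape ``if $R$ is affine Noetherian and satisfies the Dixmier--Moeglin equivalence then so does $R[t^{\pm1};\sigma]$''. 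One would prove the step via the Goodearl--Letzter analysis of $\operatorname{Spec}R[t^{\pm1};\sigma]$ over the $\sigma$-spectrum of $R$: a prime $Q$ is pinned down by the $\sigma$-prime $I=\bigcap_{n\in\ZZ}\sigma^n(Q\cap R)$ of $R$ together with a prime of a localized skew-Laurent ring over $R/I$, and rationality and local closedness of $Q$ reduce to ``$\sigma$-rationality'' and ``$\sigma$-local closedness'' of $I$, which in turn are read off from the Dixmier--Moeglin equivalence for $R$.

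The hard part --- and the reason the inductive step is not a formality --- is precisely this transfer result, since the Dixmier--Moeglin equivalence genuinely fails for arbitrary skew-Laurent extensions: a rational, non-locally-closed prime can be manufactured from an automorphism of infinite order acting with subtle dense orbits on a non-rational variety (the known elliptic-curve examples). What prevents this here is the special provenance of the $\sigma_i$: on the group side each is conjugation in a nilpotent group, hence unipotent on the associated graded of $kN$, while on the ``commutative core'' $\mathcal{U}(\mathfrak{g})$ it acts through the image of the finitely generated nilpotent-by-finite group $G$ in $GL(\mathfrak{g})$, which is triangularizable over $\kbar$ up to finite index --- so $\sigma_i$ is in effect a mixture of a unipotent part and a diagonal/toral part, and for each of these the relevant subquotients are affine-space-like or quantum-torus-like rings whose prime spectra are explicitly understood and do satisfy the equivalence. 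The genuine work lies in organizing this dichotomy uniformly along the whole tower: tracking how the accumulated automorphisms act at each stage, handling the non-commutativity inherited from $\mathcal{U}(\mathfrak{g})$, and verifying that no elliptic-curve-type dynamics can be synthesized inside a cocommutative Hopf algebra of finite GK-dimension.
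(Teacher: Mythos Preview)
The paper is a survey and does not prove this theorem; it simply states the result and attributes it to Bell and Leung \cite{BL}. There is therefore no in-paper proof against which to compare your proposal.

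On the proposal itself: the structural set-up via Cartier--Gabriel--Kostant, the passage to a torsion-free nilpotent normal subgroup $N\trianglelefteq G$ of finite index, the presentation of $H'=\mathcal{U}(\mathfrak{g})\star kN$ as an iterated skew-Laurent extension of $\mathcal{U}(\mathfrak{g})$, the descent of rationality and local closedness through the finite normalizing extension $H'\subset H$, and the two ``easy'' implications (locally closed $\Rightarrow$ primitive $\Rightarrow$ rational via Jacobson plus Nullstellensatz) are all correct and standard. The genuine gap is exactly where you yourself flag it. The inductive transfer statement ``$R$ satisfies the Dixmier--Moeglin equivalence $\Rightarrow$ $R[t^{\pm1};\sigma]$ does'' is false in general, as you acknowledge, and your final paragraph explaining why the nilpotent/triangularizable provenance of the $\sigma_i$ should rule out elliptic-type pathologies is a plausibility argument, not a proof. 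In particular, knowing that $G$ acts on $\mathfrak{g}$ through a (virtually) triangularizable subgroup of $GL(\mathfrak{g})$ does not by itself control the induced dynamics on $\operatorname{Spec}\mathcal{U}(\mathfrak{g})$, which is far from the spectrum of a commutative polynomial ring; and the phrase ``organizing this dichotomy uniformly along the whole tower'' names the difficulty rather than resolving it. What you have is a reasonable roadmap with the decisive step left open; to turn it into a proof you would need either a genuine transfer theorem valid for this restricted class of automorphisms, or a different organizing principle (for instance, a direct bound exploiting the finite GK-dimension hypothesis) that bypasses the one-variable-at-a-time induction.
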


A similar conclusion had earlier been obtained for quantised coordinate rings, \cite{GL}. Here, to say that $P$ is $\emph{rational}$ means that the centre of the artinian quotient ring of $H/P$ is just $k$; and $P$ is $\emph{locally closed}$ if
\[
P\subsetneq H\cap \bigcap\{Q:Q\in\operatorname{Spec}(H), P\subsetneq Q\}.
\]
When the three subclasses of prime ideals of an algebra $R$ coincide as in the theorem, so that primitivity is characterised for the ideals of $R$ by both an intrinsic algebraic property and by a topological property, we say that $R$ \emph{satisfies the Dixmier-Moeglin equivalence}. Note that Bell and Leung included the hypothesis ``H is noetherian" in \cite{BL}, but in fact this is a consequence of the other hypotheses, by Theorem \ref{addoff} and the remark following its proof. On the other hand, if the hypotheses of Theorem \ref{BL}  are weakened by changing ``of finite GK-dimension" to ``noetherian", then the statement is false - it fails for group algebras \cite{L}.This is perhaps further evidence in support of the suggestion implicit in Question G: namely, that for Hopf algebras in charactersitic 0, finiteness of GK-dimension may be a stronger and perhaps more useful working hypothesis than noetherianity.

Bell and Leung conjecture in \cite{BL} that Theorem \ref{BL} remains true with the word ``cocommutative" deleted (but now of course adding ``noetherian", since Theorem \ref{addoff} no longer applies. As the natural first step in this direction, we propose the following perhaps quite easy question. (For the definition of ``pointed", see subsection \ref{C2}.)
\medskip

\noindent{\bf Question H:} Does the Dixmier-Moeglin equivalence hold for pointed affine noetherian Hopf algebras of finite GK-dimension over an algebraically closed characteristic $0$ base field?

\medskip

\section{The coradical filtration; pointed and connected Hopf algebras}\label{SecZ}

We recall some standard concepts and notation; details can be found in \cite[Chapter 5]{Mo}.

\subsection{}\label{C1}

The \emph{coradical} $C_{0}$ of a coalgebra $C$ is the sum of the simple subcoalgebras of $C$. The coradical is the first term of the \emph{coradical filtration}, defined inductively for $i\geq 0$ by
$$
C_{i+1}=\{c\in C:\Delta(c)\in C_{i}\otimes C+C\otimes C_{0}\}.$$ This is an ascending chain and exhaustive filtration of $C$: $C_{i}\subseteq C_{i+1}$ and $\bigcup_{i}C_{i}=C$. Moreover it is a coalgebra filtration, meaning that for all $i\geq{0}$, $$\Delta(C_{i})\subseteq \sum_{0\leq j\leq i} C_{j}\otimes C_{i-j}.$$

\subsection{}\label{C2}

Clearly, the span $kG(C)$ of the group-like elements is contained in $C_{0}$. We say that $C$ is \emph{pointed} if $kG(C)=C_{0}$, equivalently if every simple subcoalgebra of $C$ is one-dimensional; and $C$ is \emph{connected} if $C_{0}=k$, equivalently if $C$ is pointed with $G(C)=\{1\}$.

\subsection{}\label{C3}

Suppose now that $H$ is a Hopf algebra. In general, the coradical filtration $\{H_{n}\}$ of $H$ is $\emph{not}$ an algebra filtration, but $\{H_{n}\}$ \emph{is} an algebra filtration when $H_{0}$ is a Hopf subalgebra of $H$. In particular, this is the case when $H$ is pointed. When $\{H_{n}\}$ is an algebra filtration we can form the associated graded algebra of $H$ with respect to its coradical filtration, $$\operatorname{gr}H:=\bigoplus_{i\geq{0}} H_{i}/H_{i-1}=\bigoplus_{i\geq{0}}H(i); \;\; H_{-1}=\{0\}.$$ There is in this case an obvious induced Hopf algebra structure on $\operatorname{gr}H$. Indeed, $\operatorname{gr}H$ is a \emph{coradically graded} Hopf algebra, meaning that, for all $n\geq{0}$ $$(\operatorname{gr}H)_{n}=\bigoplus_{i=0}^{n}H(i).$$ In particular, $\operatorname{gr}H$ is pointed [resp. connected] if $H$ is pointed [resp. connected]. Moreover, $\operatorname{gr}H$ is a \emph{graded coalgebra}, meaning that $$\Delta(H(n))\subseteq \sum_{0\leq i\leq n}H(i)\otimes H(n-i)$$ for all $n\geq{0}$.

\section{Pointed Hopf algebras of finite GK-dimension}\label{SecD}

\subsection{}\label{grounded} Suppose that $H$ is a pointed Hopf algebra. By \cite{AS} the associated graded algebra $\operatorname{gr}H$ with respect to the coradical filtration of $H$ exists, and is a graded Hopf algebra. There is an obvious surjective Hopf algebra morphism $$\pi:\operatorname{gr}H\rightarrow H(0)=kG(H).$$ Setting $R$ to be the algebra of \emph{coinvariants} $$R:=\{h\in H:(\operatorname{id}\otimes \pi)\circ \Delta(h)=h\otimes 1\},$$ one finds that $\operatorname{gr}H$ decomposes as a smash product or bosonisation,
\begin{equation}\label{boson}
\operatorname{gr}H=R \# kG(H).
\end{equation}
Here, $R$ is not in general a Hopf subalgebra; but it is a braided Hopf algebra in the category of Yetter-Drinfeld modules over $kG(H)$; see, for example, \cite{AS}. Moreover, $R$ inherits the grading from $\operatorname{gr}H$, with $R(0)=k$ and $$(\operatorname{gr}H)_{1}=(R(0)\oplus R(1))G(H).$$

\medskip

Around the end of the last century, Andruskiewitsch and Schneider began a programme to study pointed Hopf algebras by means of the above machinery. They focused on the case where the subalgebra $R$ in (\ref{boson}) is generated in degree 1 - that is,
\begin{equation}\label{gamma}
R=k\langle R(1) \rangle;
\end{equation}
when this happens $R$ is called a \emph{Nichols algebra}. They conjectured that this is always the case when $H$ is finite dimensional and $k$ is algebraically closed of characteristic 0 \cite{AS2}. A review of progress on this project up to March 2014, with many references, can be found in \cite{A}.

\medskip

For a pointed Hopf algebra $H$, the decomposition (\ref{boson}) affords a window on the GK-dimension of $H$:

\begin{theorem}($\mathrm{Zhuang}$, \cite[Corollary 3.6, Proposition 3.8 and Theorem 5.4]{Zhu})\label{zhuang}
Let $H$ be a pointed Hopf algebra.

{\rm{(i)}} $\operatorname{GKdim}H=\operatorname{sup}\{\operatorname{GKdim}A: \textrm{$A$ affine Hopf subalgebra of $H$}\}$; moreover, $G(H)$ is finitely generated if $H$ is affine.

{\rm{(ii)}} Retain the notation of subsection \ref{grounded}, (though without assuming (\ref{gamma})). Suppose that $\operatorname{dim}_{k}R(1)<\infty$ and that $\operatorname{gr}H$ is affine. Then $$\operatorname{GKdim}R+\operatorname{GKdim}kG(H)=\operatorname{GKdim}\operatorname{gr}H=\operatorname{GKdim}H.$$
\end{theorem}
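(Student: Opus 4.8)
The plan is to deduce Theorem~\ref{zhuang} from the cited results of Zhuang, organising the argument around the two displayed equalities in part~(ii) and treating part~(i) as the tool that allows one to reduce GK-dimension questions to the affine case. First I would address part~(i). The inequality $\operatorname{GKdim}H\geq\operatorname{GKdim}A$ for every affine Hopf subalgebra $A\subseteq H$ is immediate, since GK-dimension is monotone under inclusion of subalgebras \cite{KL}. For the reverse inequality one uses the fact that $\operatorname{GKdim}$ of any algebra is the supremum of $\operatorname{GKdim}$ over its finitely generated subalgebras, together with the observation that in a pointed Hopf algebra every finitely generated subalgebra is contained in a finitely generated Hopf subalgebra: this is exactly the content of \cite[Corollary 3.6]{Zhu}, which says that a finite subset of a pointed Hopf algebra generates an affine Hopf subalgebra (the key point being that the coradical filtration lets one control $\Delta$ and $S$ on a finite generating set). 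The ``moreover'' clause, that $G(H)$ is finitely generated when $H$ is affine, follows because $kG(H)=H_0$ is a Hopf subalgebra on which the quotient coalgebra structure is trivial, and a finitely generated group algebra forces $G(H)$ to be finitely generated; this is \cite[Proposition 3.8]{Zhu}.

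Next I would turn to part~(ii), starting with the second equality $\operatorname{GKdim}\operatorname{gr}H=\operatorname{GKdim}H$. The standard inequality $\operatorname{GKdim}\operatorname{gr}H\geq\operatorname{GKdim}H$ holds for any filtered algebra whose associated graded ring is well-behaved \cite{KL}; here the coradical filtration is an algebra filtration because $H$ is pointed (subsection~\ref{C3}). The reverse inequality is the substantive direction and is precisely \cite[Theorem 5.4]{Zhu}: the hypotheses $\dim_k R(1)<\infty$ and $\operatorname{gr}H$ affine are what make this work, since they bound the growth of $\operatorname{gr}H$ in terms of data already visible in $H$. So this equality is quoted rather than reproved.

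Finally, for the first equality $\operatorname{GKdim}R+\operatorname{GKdim}kG(H)=\operatorname{GKdim}\operatorname{gr}H$, I would use the bosonisation decomposition $\operatorname{gr}H=R\#kG(H)$ from (\ref{boson}). As a vector space $\operatorname{gr}H\cong R\otimes kG(H)$, and $R$ is a subalgebra while $kG(H)$ is a Hopf subalgebra normalising (in the appropriate braided sense) $R$; the smash product structure means $\operatorname{gr}H$ is a free module over $kG(H)$ with basis a $k$-basis of $R$, and likewise built from $R$. One then invokes the GK-dimension estimates for smash products / crossed products in \cite{KL}: for a well-behaved extension built from $R$ and a group algebra of a finitely generated group one has $\operatorname{GKdim}(R\#kG)=\operatorname{GKdim}R+\operatorname{GKdim}kG$, the $\leq$ direction from a filtration/tensor argument and the $\geq$ direction from the two subalgebras sitting inside. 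Since $\operatorname{gr}H$ is assumed affine, so is $R$ (it is a homomorphic image under the projection killing $kG(H)$, or a subalgebra, and $R(1)$ is finite-dimensional), and $G(H)$ is finitely generated by part~(i); this legitimises the smash-product formula.

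The main obstacle is the first equality: one must be careful that the general smash-product GK-dimension formula of \cite{KL} actually applies to a \emph{braided} bosonisation $R\#kG(H)$, where $R$ is only a braided Hopf algebra in the Yetter--Drinfeld category and the multiplication on $\operatorname{gr}H$ is twisted by the braiding. The saving grace is that, as an algebra, $\operatorname{gr}H$ is an ordinary smash product of the associative algebra $R$ by the group $G(H)$ (the braiding affects only how $R$ sits as a subalgebra, not the underlying multiplication of $\operatorname{gr}H$ as a $G(H)$-graded object), so the filtration by the length filtration on $G(H)$ behaves exactly as in the classical crossed-product case; checking this compatibility carefully is where the real work of part~(ii) lies, and it is handled in \cite{Zhu}.
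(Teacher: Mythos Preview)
Your proposal and the paper agree at the level of logical structure: both simply reduce the theorem to the three cited results of Zhuang, so in that sense the proposal is correct. The paper offers no proof beyond the citation, adding only the remark that ``the proof of Theorem~\ref{zhuang} is surprisingly delicate, with (i) and the second equality in (ii) using Takeuchi's construction \cite{T} of free Hopf algebras.''

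Where your commentary diverges from the paper's is in the heuristics you supply for the cited results, and here there are real inaccuracies. For part~(i) you say the point is that ``the coradical filtration lets one control $\Delta$ and $S$ on a finite generating set''; that is not the mechanism in \cite{Zhu}, which instead goes through Takeuchi's free Hopf algebra on a coalgebra. For the second equality $\operatorname{GKdim}\operatorname{gr}H=\operatorname{GKdim}H$, you label $\operatorname{GKdim}\operatorname{gr}H\geq\operatorname{GKdim}H$ as ``standard,'' but for the coradical filtration the terms $H_n$ are typically infinite dimensional (already $H_0=kG(H)$), so the usual dimension-counting arguments from \cite{KL} do not apply directly; this is exactly why the paper singles out this step as delicate and again traces it to Takeuchi's construction. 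Correspondingly, you identify the smash-product formula $\operatorname{GKdim}R+\operatorname{GKdim}kG(H)=\operatorname{GKdim}\operatorname{gr}H$ as the ``main obstacle,'' whereas the paper locates the difficulty in part~(i) and in the second equality; once $\operatorname{gr}H$ is affine and $G(H)$ is finitely generated, the smash-product equality is the comparatively routine part.
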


In the light of the theorem and bearing in mind Question F, it's natural to ask:
\medskip

\noindent{\bf Question I:} With the above notation, is $\mathrm{GKdim}R \in \mathbb{Z} \cup \{\infty\}$?
\medskip

The proof of Theorem \ref{zhuang} is surprisingly delicate, with \rm{(i)} and the second inequality in (ii) using Takeuchi's construction \cite{T} of free Hopf algebras. It follows easily from \cite[Theorem 5.4.1 (1)]{Mo} that $\operatorname{dim}_{k}R(1)<\infty$ if and only if the space $P_{G}'(H)$ of non-trivial skew-primitive elements of $H$ is finite dimensional . Both this hypothesis and the requirement that $\operatorname{gr}H$ is affine appear rather inconvenient, but unfortunately it is not enough to simply assume that $H$ is affine, as Zhuang notes in \cite[Example 5.7]{Zhu}:

\begin{example}\label{positive}
Let $k$ be the field of $p$ elements and take $H=k[x]$, with $x$ primitive. Then $$\operatorname{gr}H\cong k[x_{1},x_{2},\ldots]/\langle x_{1}^{p},x_{2}^{p},\ldots \rangle,$$ so $$0=\operatorname{GKdim}\operatorname{gr}H<\operatorname{GKdim}H=1.$$
\end{example}

Nevertheless there is some evidence that these pathologies disappear in characteristic $0$:
\medskip

\noindent{\bf Question J:}(Wang, Zhang, Zhuang, \cite{WZZ})
Let $H$ be a pointed Hopf $k$-algebra. If $k$ has characteristic $0$ and $H$ is affine of finite GK-dimension, is $\operatorname{dim}_{k}R(1)<\infty$ and $\operatorname{gr}H$ affine?
\medskip

\section{Connected Hopf Algebras of finite Gelfand-Kirillov dimension}\label{SecE}
In this section we specialise the discussion from \S\ref{SecD} to the case where the Hopf algebra $H$ is connected, so $H_{0}=k$ and $G(H)=1$. We assume throughout this section that $k$ is algebraically closed of characteristic $0$.

\subsection{}\label{E1}
Suppose that $k$ and $H$ are as above, and $H$ is in addition \emph{cocommutative}.  Then from the isomorphism (\ref{alphastar}) in subsection \ref{addon} we see that
\medskip
\begin{center}
\textit{$H$ is the enveloping algebra $\mathcal{U}(P(H)$ of its Lie algebra $P(H)$ of primitive elements.}
\end{center}
\medskip

And, conversely, every enveloping algebra is connected. One easily shows from the PBW theorem (or deduces as a special case of Theorem \ref{lazard} below) that
$$ \operatorname{GKdim}H<\infty \iff \operatorname{dim}_{k}(P(H))<\infty, $$
and in this case these two integers are equal.

\subsection{}\label{E2}
Now suppose instead that $k$ and $H$ are as above, with $H$ affine and \emph{commutative}. Then $H$ is the algebra of polynomial functions $\mathcal{O}(G)$ of some affine algebraic group $G$, as noted in subsection \ref{addon}. Then we have:

\begin{theorem}\label{lazard}
Let $k$ be an algebraically closed field of characteristic $0$, and let $H$ be an affine commutative Hopf $k$-algebra. Then $\operatorname{GKdim}H<\infty$, and the following are equivalent.

{(\rm i)}  $H$ is connected, with $\operatorname{GKdim}H=n$.

{(\rm ii)}  $H=k[x_{1},\ldots,x_{n}]$, a polynomial $k$-algebra in $n$ indeterminates.

{(\rm iii)}  $H=\mathcal{O}(G)$ , for a unipotent algebraic group $G$, with $\operatorname{dim}G=n$.

\end{theorem}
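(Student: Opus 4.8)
The plan is to realise $H$ as the coordinate ring of an affine algebraic group and to read each of (i)--(iii) off from that group. By the discussion preceding the theorem, $H$ --- being affine and commutative over a field of characteristic $0$, hence reduced by Cartier's theorem \cite{Wa} --- is the coordinate ring $H=\mathcal{O}(G)$ of an affine algebraic group $G$ over $k$, uniquely determined up to isomorphism, and $\GKdim H=\dim G<\infty$ by \cite{KL}. This already gives the first assertion of the theorem; moreover the integer $n$ occurring in each of (i), (ii), (iii) is in each case forced to equal $\GKdim H$ (a polynomial $k$-algebra in $m$ indeterminates has GK-dimension $m$). It therefore suffices to prove the two equivalences
\[
H \text{ is connected} \iff G \text{ is unipotent} \iff H \cong k[x_{1},\ldots,x_{n}] \text{ as a } k\text{-algebra}.
\]

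For the first equivalence I would use the standard dictionary between the coradical of $\mathcal{O}(G)$ and the rational representation theory of $G$: the simple subcoalgebras of $\mathcal{O}(G)$ are exactly the (simple) coalgebras of matrix coefficients of the irreducible rational $G$-modules, so $C_{0}=k$ --- i.e.\ $H$ is connected --- precisely when the trivial module is the only irreducible rational $G$-module. This last condition characterises unipotence: if $G$ is unipotent, Kolchin's theorem furnishes a nonzero fixed vector in every nonzero module, so every irreducible is trivial; conversely, if every irreducible is trivial then a faithful $G$-module carries a composition series with trivial factors, whence $G$ embeds in a group of upper unitriangular matrices and is unipotent. (A unipotent $G$ in particular has no nontrivial characters, so $G(H)=\{1\}$, consistently with the stronger statement $C_{0}=k$.)

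For the second equivalence, the implication ``$G$ unipotent $\Rightarrow$ $H$ a polynomial ring'' is the classical structure theory of characteristic-$0$ unipotent groups: such a $G$ is an iterated extension of copies of $\mathbb{G}_{a}$ and is isomorphic as a variety to $\AA^{\dim G}$ (via the exponential map, or by induction along a central series), so $\mathcal{O}(G)\cong k[x_{1},\ldots,x_{n}]$. For the converse, assume $H=\mathcal{O}(G)\cong k[x_{1},\ldots,x_{n}]$; then $G$ is connected, its variety being irreducible, so the Levi decomposition in characteristic $0$ gives $G=R_{u}(G)\rtimes L$ with $L$ connected reductive and $G\cong R_{u}(G)\times L$ as varieties. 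Since $R_{u}(G)\cong\AA^{m}$ for some $m$, this forces $L\cong\AA^{n-m}$ as a variety. By Rosenlicht's theorem the character group $X^{*}(G)\cong\mathcal{O}(G)^{\times}/k^{\times}$ is trivial, and as $X^{*}(L)$ embeds into $X^{*}(G)$ we get $X^{*}(L)=1$, which for a connected reductive group means $L$ is semisimple. But a nontrivial connected semisimple group is not isomorphic as a variety to an affine space --- over $\CC$, for instance, its complex points deformation retract onto a nontrivial compact Lie group and so are not contractible, and the general characteristic-$0$ case follows by the Lefschetz principle. Hence $L=1$, so $G=R_{u}(G)$ is unipotent.

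The appeals to the theorems of Cartier, Kolchin, Mostow (Levi decomposition) and Rosenlicht, and to the structure theory of characteristic-$0$ unipotent groups, are all standard and essentially formal once assembled; the one genuinely substantive point --- and the step I expect to be the main obstacle --- is the last one, namely that a commutative Hopf algebra which happens to be a polynomial ring cannot be the coordinate ring of a group with a nontrivial semisimple part. That is exactly where a non-algebraic input (a topological or $\ell$-adic cohomology argument) seems unavoidable. An alternative is to deduce ``(i) $\Rightarrow$ (ii)'' from a Leray--Milnor--Moore-type statement --- a connected, affine, commutative \emph{graded} Hopf algebra over a field of characteristic $0$ is a polynomial algebra --- applied to $\operatorname{gr}H$ for the coradical filtration; but this merely relocates the difficulty, since one then has to check that $\operatorname{gr}H$ is reduced and finitely generated.
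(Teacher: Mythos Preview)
Your proposal is correct, and the overall framework --- realise $H$ as $\mathcal{O}(G)$ and translate each condition into a statement about $G$ --- coincides with the paper's. The paper, however, does not actually prove the theorem: it observes that the only nontrivial implication is $(\mathrm{ii})\Rightarrow(\mathrm{iii})$ and attributes that step to Lazard's 1955 theorem \cite{La}, giving no argument for the remaining implications beyond the remark that they are standard.

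Your treatment of $(\mathrm{i})\Leftrightarrow(\mathrm{iii})$ via the dictionary between the coradical of $\mathcal{O}(G)$ and irreducible rational $G$-modules, and of $(\mathrm{iii})\Rightarrow(\mathrm{ii})$ via the exponential map, fills in exactly the parts the paper leaves implicit. The genuine divergence is in $(\mathrm{ii})\Rightarrow(\mathrm{iii})$: where the paper simply cites Lazard, you supply an independent argument --- Levi decomposition, Rosenlicht's theorem to kill the torus, then a topological or $\ell$-adic obstruction to rule out a nontrivial semisimple Levi factor. This is a legitimate and well-known alternative route; its cost, as you rightly flag, is the non-algebraic input at the last step (and the care needed to transport the $\mathbb{C}$-argument to a general algebraically closed field of characteristic $0$, which is best done by descent to a finitely generated subfield or by \'etale cohomology rather than by a na\"ive appeal to the Lefschetz principle). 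Citing Lazard, by contrast, is a black box but keeps the proof entirely within algebraic group theory. Your closing remark about deducing $(\mathrm{i})\Rightarrow(\mathrm{ii})$ from $\operatorname{gr}H$ is in fact the strategy the paper later uses for the much more general Theorem~\ref{E3}, so that alternative is not merely ``relocating the difficulty'' --- it is the germ of the connected, noncommutative generalisation.
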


To say that $G$ is \emph{unipotent} is equivalent to requiring it to be a closed subgroup of the group of strictly upper triangular $m\times m$ matrices over $k$, for some $m$. The only deep part of the theorem is the implication $(\rm{ii})\implies(\rm{iii})$, which is a 1955 theorem of Lazard \cite{La}.

\subsection{}\label{conngr}

The previous two paragraphs show that connected \emph{cocommutative} and the connected \emph{commutative} Hopf $k$-algebras $H$ of finite GK-dimension share a striking common feature - in both cases $H$ has an associated graded algebra (with respect to its coradical filtration) which is a commutative polynomial algebra in $\operatorname{GKdim}H$ variables, furnished with a structure of a coradically graded Hopf algebra. Thus, when $H$ is connected cocommutative, it is essentially the Poincar$\acute{\mathrm{e}}$-Birkhoff-Witt theorem which tells us that $\operatorname{gr}H$ is the symmetric algebra $S(P(H))$ of $P(H)$; its Hopf structure, as in \ref{C3}, means that $\operatorname{gr}H=S(P(H))$ is the algebra of polynomial functions of the abelian group $(k,+)^{\oplus \operatorname{dim}_k(P(H))}$. These classical results are simultaneously generalised in the following beautiful result:

\begin{theorem}($\mathrm{Zhuang}$, \cite[\textrm{Theorem 6.10}]{Zhu})\label{E3} Let $H$ be a connected Hopf $k$-algebra, with $k$ algebraically closed of characteristic $0$. Then the following are equivalent.

{(\rm i)}  $\operatorname{GKdim}H<\infty$;

{(\rm ii)}  $\operatorname{GKdim}\operatorname{gr}H<\infty$;

{(\rm iii)} $\operatorname{gr}H$ is affine;

{(\rm iv)}  $\operatorname{gr}H\cong k[x_{1},\ldots, x_{n}]$, a polynomial algebra in $n$ indeterminates.

\noindent When these conditions hold, $\operatorname{GKdim}H=\operatorname{GKdim}\operatorname{gr}H=n$.

\end{theorem}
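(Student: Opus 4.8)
The plan is to reduce everything to Lazard's theorem (Theorem~\ref{lazard}) by proving that $\operatorname{gr}H$ is a \emph{commutative affine} Hopf algebra. Since $\mathrm{(iv)}\Rightarrow\mathrm{(iii)}\Rightarrow\mathrm{(ii)}$ are immediate, I would close the cycle by establishing $\mathrm{(i)}\Rightarrow\mathrm{(ii)}\Rightarrow\mathrm{(iii)}\Rightarrow\mathrm{(iv)}$ together with the equality of the three dimensions. The cornerstone is that $\operatorname{gr}H$ is \emph{always} commutative. Since $\operatorname{gr}H$ is connected and coradically graded, a homogeneous primitive must lie in degree $\le1$, so $P(\operatorname{gr}H)=H(1)$; I would then prove $[H(p),H(q)]=0$ by induction on $p+q$. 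For $p=q=1$ the reduced coproduct of a product of primitives, $\bar\Delta(vw)=v\otimes w+w\otimes v$, is symmetric, so $\bar\Delta([v,w])=0$ and $[v,w]$ is a primitive lying in degree $2$, hence zero. For the inductive step, expanding $\bar\Delta(uw)-\bar\Delta(wu)$ in terms of $\bar\Delta(u)$ and $\bar\Delta(w)$ produces a sum of terms, each of which vanishes by the inductive hypothesis because the homogeneous elements being commuted have total degree $<p+q$; thus $[u,w]$ is again a primitive of degree $p+q\ge2$, so $[u,w]=0$. This argument is characteristic-free, which is consistent with the fact that the $\operatorname{gr}H$ of Example~\ref{positive} is commutative --- just not finitely generated.

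The second ingredient is available only in characteristic $0$: if $\operatorname{GKdim}H<\infty$ then $\dim_kP(H)<\infty$. The subalgebra of $H$ generated by $P(H)$ is a connected cocommutative Hopf subalgebra, hence isomorphic to $\mathcal U(P(H))$ by the Cartier--Gabriel--Kostant theorem, so $\mathcal U(P(H))$ embeds in $H$; since ordered monomials in any finite subset of a basis of $P(H)$ are linearly independent in $\mathcal U(P(H))$, one gets $\operatorname{GKdim}\mathcal U(P(H))\ge\dim_kP(H)$, forcing $P(H)$ to be finite dimensional. Coradical gradedness makes $\bar\Delta$ injective on each $H(n)$ with image in $\bigoplus_{0<i<n}H(i)\otimes H(n-i)$, so an induction starting from $\dim_kH(1)=\dim_kP(H)<\infty$ shows every $H(n)$ is finite dimensional; hence $\operatorname{gr}H$ has finite-dimensional graded pieces whenever $\operatorname{GKdim}H<\infty$.

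For $\mathrm{(i)}\Rightarrow\mathrm{(ii)}$ I would invoke Theorem~\ref{zhuang}(i): $H$ is the directed union of its affine Hopf subalgebras $A$, each connected with $\operatorname{GKdim}A\le\operatorname{GKdim}H=n$. Such an $A$ is generated as an algebra by a term of its coradical filtration, which is finite dimensional by the previous paragraph, so $\operatorname{gr}A$ is affine; being also commutative, it is a polynomial algebra by Theorem~\ref{lazard}, with $\operatorname{GKdim}\operatorname{gr}A=\operatorname{GKdim}A\le n$ (the two dimensions agreeing since $\operatorname{gr}A$ is finitely generated). As $\operatorname{gr}H=\bigcup_A\operatorname{gr}A$ (Hopf subalgebras inheriting the coradical filtration), this gives $\operatorname{GKdim}\operatorname{gr}H\le n<\infty$. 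For $\mathrm{(ii)}\Rightarrow\mathrm{(iii)}$: $\operatorname{gr}H$ is now a commutative connected Hopf algebra of finite GK-dimension, hence $\operatorname{gr}H=\mathcal O(\mathfrak G)$ for a pro-unipotent group scheme $\mathfrak G$ with $\dim\mathfrak G=\operatorname{GKdim}\operatorname{gr}H<\infty$; writing $\mathfrak G$ as an inverse limit of connected unipotent algebraic groups, the transition maps are eventually isomorphisms (a connected unipotent group in characteristic $0$ is torsion-free, so a surjection between two such groups of equal dimension has finite, hence trivial, kernel), so $\mathfrak G$ is algebraic and $\operatorname{gr}H$ is affine. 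Finally $\mathrm{(iii)}\Rightarrow\mathrm{(iv)}$ is Lazard's theorem applied to the affine commutative connected Hopf algebra $\operatorname{gr}H$, giving $\operatorname{gr}H\cong k[x_1,\dots,x_n]$ with $n=\operatorname{GKdim}\operatorname{gr}H$; and once the equivalent conditions hold, $\operatorname{GKdim}H=\operatorname{GKdim}\operatorname{gr}H=n$ by Theorem~\ref{zhuang}(ii), whose hypotheses ($\dim_kP(H)<\infty$ and $\operatorname{gr}H$ affine) are now satisfied.

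The step I expect to be the real obstacle is $\mathrm{(i)}\Rightarrow\mathrm{(ii)}$: deducing any control of $\operatorname{gr}H$ from the mere finiteness of $\operatorname{GKdim}H$. Example~\ref{positive} shows this fails in positive characteristic, so the argument must use $\operatorname{char}k=0$ essentially, and above it does so twice --- through the embedding $\mathcal U(P(H))\hookrightarrow H$ (which pins down $\dim_kP(H)$ and thereby all graded pieces of $\operatorname{gr}H$) and through the rigidity of connected unipotent groups (which makes the pro-unipotent approximation to $\operatorname{Spec}\operatorname{gr}H$ terminate). The reduction to affine Hopf subalgebras in Theorem~\ref{zhuang}(i), itself proved via Takeuchi's free Hopf algebra construction, is the device bridging a possibly non-affine $H$ to the affine setting where Lazard's theorem applies; with that input absorbed, the remainder is the bookkeeping above.
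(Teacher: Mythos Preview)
Your argument is essentially correct and follows the same architecture as the paper: establish commutativity of $\operatorname{gr}H$, then use the structure theory of unipotent groups in characteristic~$0$. Two points deserve comment.

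First, your cycle does not close as written: (iv)$\Rightarrow$(iii)$\Rightarrow$(ii) together with (i)$\Rightarrow$(ii)$\Rightarrow$(iii)$\Rightarrow$(iv) never returns to (i). The missing step (iii)$\Rightarrow$(i) is implicit in your final appeal to Theorem~\ref{zhuang}(ii), but you should note that its hypotheses follow from (iii) alone --- $\operatorname{gr}H$ affine forces $\dim_kH(1)=\dim_kP(H)<\infty$ --- rather than from (i), to avoid circularity.

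Second, you have inverted the paper's assessment of where the work lies. The paper treats (iii)$\Rightarrow$(i)$\Rightarrow$(ii) as routine (\cite[Lemma~6.5 and Proposition~6.6]{KL}): once you have established that the coradical filtration is locally finite --- which you do, via $\dim_kP(H)<\infty$ and the injectivity of $\bar\Delta$ on $H(n)$ for $n\ge2$ --- the inequality $\operatorname{GKdim}\operatorname{gr}H\le\operatorname{GKdim}H$ is immediate, and your detour through affine Hopf subalgebras for (i)$\Rightarrow$(ii) is unnecessary. The paper instead locates the substance at (ii)$\Rightarrow$(iii), handled by Zhuang's Lemmas~6.8--6.9 (a connected coradically graded Hopf algebra of finite GK-dimension in characteristic~$0$ is affine). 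Your pro-unipotent argument for this step is a plausible alternative, but it rests on structural facts about inverse limits of unipotent groups (surjective transition maps, behaviour of dimension in the limit) that would themselves need to be justified; Zhuang's lemmas do this work directly on the Hopf-algebra side.

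For commutativity of $\operatorname{gr}H$ your direct induction on the reduced coproduct is one of the standard proofs; the paper cites Sweedler and the graded-dual argument of Andruskiewitsch--Schneider instead. For (iii)$\Rightarrow$(iv) both you and the paper invoke Lazard, though the paper also records the smoothness route via \cite[III.2.5]{NVO}.
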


The first key point in the proof is that
\begin{equation}\label{alpha}
\textit{$H$ a connected Hopf algebra $\implies$ $\operatorname{gr}H$ is  commutative.}
\end{equation}

This is in fact valid over $\emph{any}$ field, and goes back to Sweedler \cite[\textrm{Theorem 11.2.5 a}]{Sw}. It has been reproved several times since; Zhuang uses a lemma of Andruscieuwitsch and Schneider \cite[\textrm{Lemma 5.5}]{AS2}, to deduce that the graded dual of $\operatorname{gr}H$, that is $\bigoplus_{n\geq{0}}(\operatorname{gr}(H)(n))^{*}$, is generated in degree 1 and is therefore cocommutative. Hence, $\operatorname{gr}H$ is commutative. Yet another proof, attributed to Foissy, can be found in \cite[Proposition 1.6]{AgS}.

To prove $(\rm{ii})\implies (\rm{iii})$, Zhuang shows that a connected coradically graded Hopf algebra in characteristic $0$ must be affine if its GK dimension is finite, \cite[\textrm{Lemma 6.8, 6.9}]{Zhu} - this is the dual version of the fact that unipotent groups in characteristic $0$ are built from copies of the additive group $(k, +)$ of $k$. Notice that $(\rm{ii})\implies (\rm{iii})$ is false in positive characteristic, by Example \ref{positive}.

Given the implication (\ref{alpha}), the equivalence of $(\rm{ii})$ and $(\rm{iv})$ stems from the fact that, in characteristic $0$, $\operatorname{gr}H$ is the coordinate ring of an algebraic group, and so is smooth, (see for example \cite{Wa}). Therefore, $\operatorname{gr}H$, being a commutative connected graded algebra of finite global dimension $n$, is a polynomial algebra in $n$ indeterminates \cite[\rm{III}.2.5]{NVO}. Alternatively, one can note, from subsection \ref{C3}, that $\operatorname{gr}H$ is a connected Hopf algebra, since $H$ is, and is affine by $(ii) \Rightarrow (iii).$ Now appeal to Theorem \ref{lazard}.

Finally, the implications $(\rm{iii})\implies (\rm{i}) \implies (\rm{ii})$ are standard results on GK-dimension \cite[\textrm{Lemma 6.5 and Proposition 6.6}]{KL}.

\subsection{}
From Theorem \ref{E3} we can deduce by standard methods some important properties of these connected Hopf algebras, properties which we shouldn't find surprising given the fact that the algebras are deformations of commutative polynomial algebras. Part (\rm{i}) gives a positive answer to Question G for these algebras.

\begin{proposition}(\textrm{Zhuang})\cite[\textrm{Corollary 6.11}]{Zhu}.
Let $H$ be a connected Hopf $k$-algebra with $\operatorname{GKdim}H=n<\infty$, with $k$ algebraically closed of characteristic $0$.

{(\rm i)} $H$ is a noetherian domain of Krull dimension at most $n$.

{(\rm ii)} $H$ is AS-regular and Auslander-regular, of global dimension $n$.

{(\rm iii)} $H$ is GK-Cohen-Macaulay.

\end{proposition}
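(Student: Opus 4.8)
The plan is to push everything down to the associated graded ring, using Theorem \ref{E3}. Since $H$ is connected with $\operatorname{GKdim} H = n < \infty$, the coradical filtration $\{H_i\}_{i\ge 0}$ is an exhaustive algebra filtration (it is an algebra filtration because $H_0 = k$ is a Hopf subalgebra, cf. \S\ref{C3}), and Theorem \ref{E3}(iv) identifies $\operatorname{gr} H \cong k[x_1,\dots,x_n]$. So $\operatorname{gr} H$ is a commutative polynomial algebra --- a noetherian domain, regular of global and Krull dimension $n$, Cohen--Macaulay and GK-Cohen--Macaulay. Moreover the filtration is $\mathbb{Z}_{\ge 0}$-indexed with $H_{-1}=0$ and $H_0 = k$, so its Rees ring is noetherian and the filtration is Zariskian; this is exactly the situation in which these properties transfer from $\operatorname{gr} H$ to $H$. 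Each of (i)--(iii) then follows by quoting a standard filtered--graded transfer result, which is why the paper obtains them ``by standard methods''.

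For (i): a filtered ring whose associated graded ring is noetherian is itself noetherian (lift ideals through the filtration), and if in addition $\operatorname{gr} H$ is a domain the principal-symbol map is multiplicative, so $H$ is a domain; hence $H$ is a noetherian domain. The (Gabriel--Rentschler) Krull dimension obeys the usual filtered--graded inequality $\operatorname{K.dim} H \le \operatorname{K.dim}\operatorname{gr} H = n$.

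For (ii) and (iii): first $\operatorname{gldim} H \le \operatorname{gldim}\operatorname{gr} H = n$ by the standard filtered bound, with equality because $H$ is augmented by $\epsilon$ and $\operatorname{pd}_H k = n$ (seen by computing $\operatorname{Ext}_H^\bullet(k,k)$ through the Rees ring, using $\operatorname{Ext}_{\operatorname{gr} H}^\bullet(k,k) = \Lambda(\xi_1,\dots,\xi_n)$, which is nonzero in top degree $n$). The Auslander condition and the AS-Gorenstein/regular condition $\operatorname{Ext}_H^i(k,H) = \delta_{i,n}\,k$ both lift from $\operatorname{gr} H$ to the Zariskian filtered ring $H$ by the transfer theorems for filtered Auslander-regular rings (Bj\"ork, Levasseur; see also \cite{Zhu}); this gives (ii). For (iii), take a nonzero finitely generated $H$-module $N$ with a good filtration, so $\operatorname{gr} N$ is finitely generated over $\operatorname{gr} H$ with $\operatorname{GKdim}_H N = \operatorname{GKdim}_{\operatorname{gr} H}\operatorname{gr} N$ \cite[Lemma 6.5, Proposition 6.6]{KL} and $\operatorname{j}_H(N) = \operatorname{j}_{\operatorname{gr} H}(\operatorname{gr} N)$; adding the GK-Cohen--Macaulay identity for $\operatorname{gr} N$ over the polynomial ring $\operatorname{gr} H$ yields $\operatorname{j}_H(N) + \operatorname{GKdim}_H N = n = \operatorname{GKdim} H$, as required.

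The hard part is not any one computation but checking that the hypotheses of these transfer theorems are genuinely met: one must verify that the coradical filtration is exhaustive and discrete with noetherian Rees ring (i.e. Zariskian), and, for the AS-regular statement, that $H$ carries the connected/augmented structure the transfer theorem needs --- both are immediate here since $H$ is a connected Hopf algebra. Granting Theorem \ref{E3}, the Proposition follows. (Alternatively one could invoke the general homological theory of noetherian Hopf algebras of finite global dimension, but the filtered argument is the most direct route.)
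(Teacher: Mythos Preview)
Your proposal is correct and is precisely the ``standard methods'' the paper alludes to: Theorem~\ref{E3} identifies $\operatorname{gr}H$ with a commutative polynomial algebra, and the noetherian, domain, Krull-dimension, Auslander/AS-regularity and GK-Cohen--Macaulay properties are then lifted back to $H$ via the usual filtered--graded transfer results for Zariskian filtrations. The only remark to add is that, as the paper notes immediately after the Proposition, the domain conclusion in (i) actually holds for \emph{any} connected Hopf $k$-algebra without the hypothesis $\operatorname{GKdim}H<\infty$ (a fact Zhuang attributes to Le~Bruyn); your argument recovers only the finite-GK case, but that is all the Proposition claims.
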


Unexplained terminology used above can be found in many references, for example \cite{BZ}. The fact that $H$ is a domain does not need $\operatorname{GKdim}H<\infty$, and is attributed by Zhuang to Lebruyn. It's natural to ask whether noetherianity is \emph{equivalent} to finite GK-dimension for these connected Hopf algebras:

\medskip

\noindent{\bf Question K:} Let $k$ be algebraically closed of characteristic $0$, and let $H$ be a connected Hopf $k$-algebra. If $H$ is noetherian, is $\operatorname{GKdim}H<\infty$?

\medskip

 Suppose we could show that, for $H$ as in question K, $\operatorname{gr}H$ is also noetherian. Then $\operatorname{gr}H$ is affine by Molnar's Theorem, Theorem \ref{Molnar}(\rm{i}), and so $\operatorname{GKdim}H<\infty$ by Theorem \ref{E3}. But notice that the seemingly innocuous Question K contains as a special case the characteristic $0$ case of the notorious Question C!

\subsection{}\label{E5}

Following the work of Zhuang outlined above there has been considerable further research on connected Hopf algebras in characteristic $0$. First, all such algebras of GK dimension at most $4$ have been  classified, in \cite{Zhu} and \cite{WZZ}. Of course, by subsection \ref{E1}, we always have for each $n\geq{0}$, the enveloping algebras $H=\mathcal{U}(\mathfrak{g})$ of the Lie algebras $\mathfrak{g}$ with $\operatorname{dim}_{k}(\mathfrak{g})=n$. For $n=0,1,2,$ it is not hard to show \cite[\textrm{Proposition 7.5}]{Zhu} that this completes the list. However:

\begin{theorem}($\mathrm{Wang, Zhang, Zhuang}$, \cite{Zhu}, \cite{WZZ})\label{ThmE5} Let $H$ be a connected Hopf $k$-algebra, where $k$ is algebraically closed of characteristic $0$.

{(\rm i)} If $\operatorname{GKdim}H=3$, then $H$ is either isomorphic as a Hopf algebra to $\mathcal{U}(\mathfrak{g})$, where $\mathfrak{g}$ is a Lie algebra of dimension $3$, or $H$ is a member of one of two explicitly defined (infinite) families.

{(\rm ii)} If $\operatorname{GKdim}H=4$, then $H$ is either isomorphic as a Hopf algebra to $\mathcal{U}(\mathfrak{g})$, where $\mathfrak{g}$ is a Lie algebra of dimension $4$, or $H$ is a member of one of 12 explicitly defined families.

\end{theorem}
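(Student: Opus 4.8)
The plan is to classify connected Hopf $k$-algebras $H$ of small finite GK-dimension by exploiting the tight control that Theorem \ref{E3} gives on the associated graded algebra $\operatorname{gr}H$. By Theorem \ref{E3}, if $\operatorname{GKdim}H=n\le 4$ then $\operatorname{gr}H\cong k[x_1,\dots,x_n]$ as a graded Hopf algebra, and it is coradically graded and connected. So the first step is to enumerate the possible coradically graded Hopf algebra structures on $k[x_1,\dots,x_n]$ for $n=3,4$; equivalently, since $\operatorname{gr}H$ is commutative (by (\ref{alpha})) and finitely generated, it is $\mathcal{O}(G)$ for a graded unipotent group $G$ of dimension $n$, and one must list the graded (i.e.\ $\mathbb{G}_m$-equivariant) structures. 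The grading data amounts to a choice of positive degrees $d_1\le\cdots\le d_n$ for the generators together with a Hopf (group) structure compatible with the grading; bounding the $d_i$ and the few ways the comultiplication can act on low-degree generators is a finite, if fiddly, bookkeeping problem for $n\le 4$.

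Second, for each admissible graded model $A=\operatorname{gr}H$ one must determine all filtered connected Hopf algebras $H$ with $\operatorname{gr}H\cong A$ — that is, all Hopf-algebra deformations of $A$ compatible with the coradical filtration. Here one lifts a homogeneous generating set $x_1,\dots,x_n$ of $A$ to filtered generators of $H$ and writes the relations and the coproduct of $H$ as the graded relation/coproduct plus strictly-lower-filtration correction terms; Hopf-algebra axioms (coassociativity, the antipode conditions, and compatibility of product and coproduct) then become a finite system of constraints on the correction coefficients. The enveloping algebras $\mathcal{U}(\mathfrak{g})$ with $\dim_k\mathfrak{g}=n$ account for one standard branch (the primitively generated case, cf.\ subsection \ref{E1}); the remaining deformations, organised up to Hopf isomorphism, produce the two families in (i) and the twelve families in (ii). Throughout one uses the basic structural facts from the Proposition above (e.g.\ $H$ is a noetherian domain, AS-regular of global dimension $n$) to constrain the relations, and the characteristic-$0$ hypothesis to guarantee exponential/logarithm-type change of variables when normalising the deformation.

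The main obstacle is the sheer combinatorial explosion in step two when $n=4$: the number of graded models to treat is already sizeable, and for each the deformation analysis requires solving the Hopf-compatibility equations and then quotienting by the (often subtle) action of filtered Hopf automorphisms to avoid overcounting and to recognise when two superficially different presentations give isomorphic Hopf algebras. A secondary difficulty is verifying that each putative family genuinely consists of Hopf algebras — i.e.\ that the antipode exists and is bijective, and that the algebra has the right GK-dimension (using Theorem \ref{E3} in reverse, by checking $\operatorname{gr}H$ is the intended polynomial algebra). I would expect the $n\le 2$ case to be immediate (as noted, \cite[Proposition 7.5]{Zhu}), the $n=3$ case to be a manageable hand computation, and essentially all the real work — and the reason this is a theorem of \cite{WZZ} rather than a remark — to lie in the exhaustive but finite case analysis for $n=4$.
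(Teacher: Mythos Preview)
The paper does not give a proof of this theorem: it is a survey, and the result is simply quoted from \cite{Zhu} (for $n=3$) and \cite{WZZ} (for $n=4$), followed by an illustrative example. So there is no ``paper's own proof'' to compare against.

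That said, your outline is an accurate description of the strategy actually used in \cite{Zhu} and \cite{WZZ}: one invokes Theorem~\ref{E3} to identify $\operatorname{gr}H$ with a polynomial algebra carrying a coradically graded Hopf structure, classifies those graded structures (equivalently, the possible placements of the generators in the coradical filtration), and then lifts to $H$ by solving the compatibility constraints imposed by coassociativity and the algebra relations, modulo filtered Hopf isomorphism. Your assessment of where the work lies --- routine for $n\le 2$, a hand computation for $n=3$, and an extensive case analysis for $n=4$ --- is also accurate. One small correction: the classification is organised primarily by $\dim_k P(H)$ (the size of the primitive space), rather than by first listing all $\mathbb{G}_m$-equivariant unipotent group structures; but this amounts to the same thing, since the degree-one part of $\operatorname{gr}H$ is exactly $P(H)$.
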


Note that the complex Lie algebras of dimension at most $4$ have been classified -  see e.g. \cite[\textrm{p.209, Theorem 1.1}]{OV}. To give some feel for the algebras of the theorem, here is a sample of one of the families of GK dimension 3, as in (i).

\begin{example}\label{ExaE5}
Let $\lambda\in {k}$, and let $B(\lambda)$ be the $k$-algebra generated by $x$, $y$ and $z$, subject to the relations $$ [x,y]=y, [x,z]=z-\lambda y, [y,z]=0. $$  The coalgebra structure on $B(\lambda)$ is given by letting $x$ and $y$ be primitive, with $$\Delta(z)=z\otimes 1 +1\otimes z +x\otimes y-y\otimes x$$ and $$\epsilon(x)=\epsilon(y)=\epsilon(z)=0.$$ The antipode is given by $$S(x)=-x, S(y)=-y, S(z)=-z+y.$$
\end{example}

It's worth noting that, for all $n\geq{1}$, $S^{n}(z)=(-1)^{n}(z-ny)$, so that $S$ has infinite order, in contrast to the situation for commutative or cocommutative Hopf algebras, which are always \emph{involutary}, meaning that $S^{2}=\operatorname{Id}$, \cite[\textrm{Corollary 1.5.12}]{Mo}.

It is clear from the definition above that the algebra $B(\lambda)$ is isomorphic, \emph{as an algebra}, to the enveloping algebra $\mathcal{U}(\mathfrak{g}_{\lambda})$ of a Lie algebra $\mathfrak{g}_{\lambda}$. In fact, this is the case for \emph{all} the Hopf algebras featuring in Theorem \ref{ThmE5}, as well as all the cocommutative or commutative connected Hopf algebras of subsections \ref{E1} and \ref{E2}. This makes the following question a pressing one:

\medskip

\noindent{\bf Question L:} Over an algebraically closed field $k$ of characteristic $0$, is every connected Hopf algebra of finite GK-dimension isomorphic as an algebra to the enveloping algebra of a finite dimensional Lie algebra?

\medskip

We can see no evidence in favour of a positive answer, apart from the current absence of a counterexample.

Clearly, the business of listing the residents of the zoo of connected Hopf $k$-algebras is a valuable one, but it is an enterprise doomed to failure if extended beyond small GK-dimension. It's quite surprising, in fact, that a complete catalogue has been obtained up to GK-dimension 4. An approach geared to recognising structural features of large subclasses will likely be needed to describe the range of algebras occurring in higher dimensions, One such structure is discussed in the next subsection.

\subsection{}\label{E6}
Dualizing a basic property of an affine unipotent group $G$ in characteristic $0$, namely that it has a finite chain $1=G_{0}\subset G_{1}\subset \ldots \subset G_{n}=G$ of normal subgroups with $G_{i+1}/G_{i}\cong (k,+)$ for $0\leq i\leq n$, it is natural to make the

\begin{definition}
An \emph{iterated Hopf Ore extension} (IHOE) is a Hopf $k$-algebra $H$ with a chain of Hopf subalgebras
\begin{equation}\label{ore}
k=H_{(0)}\subset H_{(1)}\subset \ldots \subset H_{(n)}=H
\end{equation}
with $H_{(i+1)}=H_{(i)}[x_{i+1};\sigma_{i+1},\delta_{i+1}]$ a skew polynomial extension for $0\leq i < n$.
\end{definition}

Here, $\sigma_{i+1}$ is an algebra automorphism of $H_{(i)}$ and $\delta_{(i+1)}$ is a $\sigma_{i+1}$-derivation, for all $i$; see, for example \cite[\S 1.2]{MR}.

IHOEs are introduced and studied in \cite{BOZZ}. Their relevance to the present discussion of connected Hopf algebras is clear from (\rm{i}) and (\rm{ii}) of:

\begin{theorem}(\cite[\textrm{Theorems 1.3, 1.5}]{BOZZ})\label{ThmE6} Let $k$ be a field and $H$ an IHOE with defining series (\ref{ore}) and antipode $S$.

{(\rm i)} $H$ is noetherian, of GK dimension $n$.

{(\rm ii)} $H$ is connected.

{(\rm iii)} Either $S^{2}=\operatorname{Id}$ or $S$ has infinite order.

{(\rm iv)} Explicit conditions can be given on $\Delta, \epsilon, S$, and on the possible choices of $\{\sigma_i, \delta_i : 2 \leq i \leq n \}.$

\end{theorem}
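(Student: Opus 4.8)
The plan is to establish the parts in the order (i), then (ii) together with the structural content of (iv), then (iii). The only serious ingredient is a Panov-type structure theorem for a single Hopf Ore extension; everything else is driven by it, and carrying out that structure theorem is where I expect the real work to lie.

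For (i) the Hopf structure plays no role. Since $k$ is noetherian and each $\sigma_i$ is an automorphism, the noncommutative Hilbert Basis Theorem for skew polynomial rings \cite[1.2.9]{MR} shows inductively that every $H_{(i)}$ is noetherian, hence so is $H=H_{(n)}$. For the dimension one invokes the standard fact that $\GKdim R[x;\sigma,\delta]=\GKdim R+1$ whenever $\sigma$ is an automorphism (via the $x$-adic filtration, whose associated graded is $R[x;\sigma]$); iterating $n$ times from $\GKdim k=0$ gives $\GKdim H=n$.

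The core is an inductive determination of the possible Hopf structures on $H_{(i+1)}=H_{(i)}[x_{i+1};\sigma_{i+1},\delta_{i+1}]$, which yields (ii) and the structural clauses of (iv) at once. Suppose inductively that $K:=H_{(i)}$ is a connected Hopf algebra; being an iterated Ore extension of $k$, it is a domain whose only units are the scalars, so $G(K)=\{1\}$. Put $x=x_{i+1}$; as $\epsilon(x)\in k$ is central we may replace $x$ by $x-\epsilon(x)$, a valid Ore variable for a modified $\sigma$-derivation, so that $\epsilon(x)=0$. Expanding $\Delta(x)=\sum_{a,b\ge0}w_{ab}(x^a\otimes x^b)$ in the free $(K\otimes K)$-module $H_{(i+1)}\otimes H_{(i+1)}$ (finitely many $w_{ab}\in K\otimes K$ nonzero), and imposing the counit identities, coassociativity, and compatibility of $\Delta$ with $xr=\sigma_{i+1}(r)x+\delta_{i+1}(r)$, a leading-term analysis — in essence Panov's structure theorem for Hopf Ore extensions — forces $w_{ab}=0$ unless $a,b\le1$ and identifies the coefficients of $x\otimes1$ and $1\otimes x$ with group-like elements of $K$; since $K$ is connected these are $1$, so
\[
\Delta(x)=x\otimes1+1\otimes x+w,\qquad w\in K^{+}\otimes K^{+}.
\]
The same computations give $S(x)=-x-\beta$, where $\beta\in K$ is the image of $w$ under $a\otimes b\mapsto S(a)b$, convert coassociativity into a cocycle-type relation on $w$, and record the compatibility constraints tying $\sigma_{i+1},\delta_{i+1}$ to $w$ — these being the explicit conditions of (iv). Granting the displayed formula, let $F_m=\bigoplus_{j=0}^{m}Kx^{j}$. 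Then $\Delta(K)\subseteq F_0\otimes F_0$, $\Delta(x)\in F_1\otimes F_1$, $\Delta$ is multiplicative, and $F_aF_b\subseteq F_{a+b}$ since the Ore relation does not raise $x$-degree; hence $\{F_m\}$ is a coalgebra filtration of $H_{(i+1)}$ with $F_0=K$. By the standard fact that the coradical of a coalgebra lies in the bottom term of any coalgebra filtration of it, $(H_{(i+1)})_0\subseteq K$; being a cosemisimple subcoalgebra of $K$ it lies in $K_0=k$, so $H_{(i+1)}$ is connected. This completes the induction and proves (ii). I expect pinning down the shape of $\Delta(x_{i+1})$ to be the main obstacle, as it is precisely the non-trivial part of Hopf Ore extension theory.

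For (iii) we assume, consistently with the rest of \S\ref{E6}, that $k$ has characteristic zero. Use $S(x_i)=-x_i-\beta_i$, $\beta_i\in H_{(i-1)}$, from the inductive step, and induct on $n$. Each $H_{(i-1)}$ is itself an IHOE, so by the inductive hypothesis either $S$ has infinite order on $H_{(i-1)}$ — hence on $H$ — or $S^{2}$ is the identity on $H_{(i-1)}$. Assume the latter holds for every $i$. Then $S^{2}(x_i)=x_i+g_i$ with $g_i:=\beta_i-S(\beta_i)\in H_{(i-1)}$, and since $S^{2}$ fixes $g_i$ we get $(S^{2})^{r}(x_i)=x_i+r g_i$ for all $r$. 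If some $g_i\ne0$, take the least such $i$: then $S^{2}$ fixes $x_1,\dots,x_{i-1}$, hence all of $H_{(i-1)}$, and $(S^{2})^{r}(x_i)\ne x_i$ for every $r\ge1$ because $k$ has characteristic zero, so $S$ has infinite order. If every $g_i=0$, then $S^{2}$ fixes every generator $x_i$, so $S^{2}=\operatorname{Id}$. (Alternatively: by (ii) and Sweedler's theorem (\ref{alpha}), $\operatorname{gr}H$ is commutative, so its antipode squares to the identity, whence $S^{2}-\operatorname{Id}$ strictly lowers the coradical filtration; in characteristic zero an operator that is unipotent on each finite-dimensional $H_m$ and of finite order is the identity.) Finally, collecting over all $i$ the relations produced in the inductive step on $\Delta(x_i),\epsilon(x_i),S(x_i),\sigma_i,\delta_i$ yields the remaining explicit assertions of (iv).
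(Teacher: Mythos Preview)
The paper itself contains no proof of this theorem: it is a survey, and the result is simply quoted from \cite{BOZZ}, with only the remark ``For details of (iv), see \cite{BOZZ}'' following the statement. There is therefore nothing in the paper to compare your argument against.

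Judged on its own terms, your sketch follows the natural line (and, in outline, the one taken in \cite{BOZZ}). Part (i) is routine as you say. For (ii) and the structural content of (iv), the decisive input is the formula $\Delta(x_{i+1}) = x_{i+1}\otimes 1 + 1\otimes x_{i+1} + w$ with $w\in H_{(i)}^{+}\otimes H_{(i)}^{+}$; once that is granted, your $x$-adic coalgebra-filtration argument for connectedness is correct and is essentially the intended mechanism. You rightly flag this Panov-type step as the real content and do not claim to have carried it out --- your ``leading-term analysis'' remark is a placeholder, not a proof --- so what you have written is an honest outline rather than a complete argument.

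One genuine gap concerns (iii). The theorem is stated over an arbitrary field $k$, but both of your arguments require characteristic zero: the iteration $(S^{2})^{r}(x_i)=x_i+rg_i$ obviously does, and in your alternative the step ``unipotent of finite order on each finite-dimensional $H_m$ implies identity'' does as well (in characteristic $p$ a nontrivial unipotent operator can have order $p$). So as written your proof of (iii) does not cover the positive-characteristic case asserted in the statement.
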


For details of (iv), see \cite{BOZZ}. Note that Example \ref{ExaE5} shows that both possibilities in $(\rm{iii})$  of the theorem can occur, even for the same algebra endowed with two different coalgebra structures. Many, but \emph{not} all, of the known connected Hopf algebras of finite GK-dimension of characteristic $0$ are IHOEs. For example, if $\mathfrak{g}$ is any semisimple finite dimensional complex Lie algebra which is \emph{not} a direct sum of copies of $\mathfrak{sl}(2,\mathbb{C})$, then $\mathcal{U}(\mathfrak{g})$ is connected of finite GK-dimension, by subsection \ref{E1}, but is \emph{not} an IHOE - the point is that $\mathfrak{g}$ does not have enough Lie subalgebras to allow the construction of a chain as in (\ref{ore}).

There are many open questions concerning IHOEs, for which we suggest the interested reader consult \cite{BOZZ}. Here we ask instead

\medskip

\noindent{\bf Question M:} Find another general construction of connected Hopf algebras of finite GK-dimension, different from those of subsections \ref{E1}, \ref{E2} and \ref{E6}.

\medskip

\subsection{}\label{E7}

Finally, let us briefly discuss the relation between connected Hopf algebras of \S \ref{SecE}, and the Andruskiewitsch-Schneider programme on pointed Hopf algebras which was briefly outlined in \S \ref{SecD}. The latter programme begins by considering the associated graded algebra $\operatorname{gr}H$ of a pointed Hopf algebra $H$, exactly as does Theorem \ref{E3} for connected Hopf algebras. In the (more general) pointed case we get the isomorphism (\ref{boson}), that $$\operatorname{gr}H\cong R\# G(H),$$ while in the connected case $G(H)=\{1\}$ and we thus have $\operatorname{gr}H\cong R$. However, Andruskiewitsch-Schneider impose the extra hypothesis (\ref{gamma}) on $R$, that it is \emph{generated in degree 1}, in order to be able to call on the Nichols algebra machinery. If $H$ is connected with $\operatorname{gr}H$ generated in degree $1$, then $\operatorname{gr}H$, being generated by primitive elements, is cocommutative; and one finds from Theorem \ref{E3} and its proof that $H$ itself is cocommutative, hence an enveloping algebra of a Lie algebra, as a Hopf algebra. To sum up: the intersection of hypothesis (\ref{gamma}) with subsections \ref{E1} to \ref{E6} consists precisely in the enveloping algebras of finite dimensional Lie algebras, with their standard cocommutative coproducts.

\end{document}